\newcommand{\McC}{\raise.5ex\hbox{c}}
\newcommand{\C}{\mathbb{C}}
\newcommand{\Gg}{\mathcal{G}}
\newtheorem{theorem}{Theorem}[section]
\newtheorem*{theorem*}{Theorem}
\newtheorem*{conjecture*}{Conjecture}
\newtheorem*{corollary*}{Corollary}
\newtheorem{proposition}[theorem]{Proposition}
\newtheorem*{proposition*}{proposition}
\def\bb{\begin{color}{blue}}
\def\bg{\begin{color}{green}}
\def\br{\begin{color}{red}}
\def\eg{\end{color}}
\def\er{\end{color}}
\def\eb{\end{color}}
\theoremstyle{remark}
\newtheorem{remark}[theorem]{Remark}
\newtheorem{problem}[theorem]{Problem}
\newtheorem{definition}[theorem]{Definition}
\newtheorem{example}[theorem]{Example}
\begin{document}

\title[Inner functions in multiply connected domains]{On the concept of inner function in Hardy and Bergman spaces in multiply connected
domains}

\author[B\'en\'eteau]{Catherine B\'en\'eteau}
\address{Department of Mathematics, University of South Florida, Tampa, FL 33620, U.S.A.}
\email{benetea@usf.edu}
\author[Fleeman]{Matthew Fleeman}
\address{Department of Mathematics, Baylor University, Waco, TX 76798 , U.S.A.}
\email{MatthewFleeman@baylor.edu}
\author[Khavinson]{Dmitry Khavinson}
\address{Department of Mathematics, University of South Florida, Tampa, FL 33620, U.S.A.}
\email{dkhavins@usf.edu}
%\author[Liaw]{Constanze Liaw}
%\address{Department of Mathematical Sciences, University of Delaware, Newark, DE 19716, U.S.A.}
%\email{liaw@udel.edu}
\author[Sola]{Alan A. Sola}
\address{Department of Mathematics, Stockholm University, 106 91 Stockholm, Sweden}
\email{sola@math.su.se}

\keywords{Inner functions, Hardy spaces, Bergman spaces, multiply connected domains.}
 \subjclass[2010]{primary: 30D55, 30H10, 30J05}
\thanks{To Stephen J. Gardiner on the occasion of his 60th birthday.}

\begin{abstract}
We discuss the notion of an inner function for spaces of analytic functions in multiply connected domains in $\mathbb{C}$, giving a historical overview and comparing several possible definitions. We explore connections between inner functions, zero-divisors for Hardy spaces and Bergman spaces, and weighted reproducing kernels. After recording some obstructions and negative results, we suggest avenues for further research and point out several open problems.
\end{abstract}

\def\bb{\begin{color}{blue}}
\def\bg{\begin{color}{green}}
\def\br{\begin{color}{red}}
\def\eg{\end{color}}
\def\er{\end{color}}
\def\eb{\end{color}}

%\theoremstyle{remark}
%\newtheorem{remark}[theorem]{Remark}
%\newtheorem{question}{Question}
%\newtheorem{definition}[theorem]{Definition}
%\newtheorem{example}[theorem]{Example}

%\keywords{Inner functions, multiply connected domains.}
% \subclass{30D50, 30H10, 30J05}

\maketitle

\section{Introduction}\label{Intro}
In this paper, we study certain special classes of bounded functions in multiply connected domains: inner functions, furnishing isometric or contractive divisors for analytic function spaces. The problems we are interested in have been explored extensively in the context of function spaces in the unit disk, going back to the early 20th century. Recall that a sequence 
\[\mathcal{Z}=\{z_j\}_{j=1}^{\infty}\subset \Gg\]
is said to be a zero set for a Banach space $\mathscr{X}$ of analytic functions in a domain $\Gg \subset \mathbb{C}$ if there exists a function $f\in \mathscr{X}$ such that $f(z_j)=0$ for all $j=1,2, \ldots$ and $f(z)\neq 0$ for $z\in \Gg\setminus \mathcal{Z}$. We adhere to the usual convention of taking multiplicities into account by allowing repetitions in  $\mathcal{Z}$. It is often desirable to be able to ``factor out" potential zeros of a given function $f \in \mathscr{X}$, and, if possible, to describe the generator(s) of zero set-based subspaces of $\mathscr{X}$ that are invariant with respect to multiplication by functions analytic in a neighborhood of $\Gg$, or with respect to bounded analytic functions.

To give a concrete example and to set the scene for what will follow, let us briefly revisit the well-understood case of the Hardy spaces $H^p$ in the unit disk $\mathbb{D}$. For $p>0$ fixed, the space $H^p$ consists of analytic functions $f\colon \mathbb{D}\to \mathbb{C}$ satisfying the growth restriction
\[\|f\|^p_p=\lim_{r\to  1^-}\frac{1}{2\pi}\int_{-\pi}^{\pi}|f(re^{i\theta})|^pd\theta<\infty.\]
When $p\geq 1$ this furnishes a norm that turns  $H^p$ into a Banach space.
Consider a sequence $\{z_j\}$ of points in $\mathbb{D}\setminus \{0\}$ that satisfies the Blaschke condition $\sum_{j=1}^{\infty}(1-|z_j|)<\infty$. Then $\{z_j\}$ is a zero set for the Hardy space and it is well-known (see \cite{Durbook,Fbook,Hoffbook}) that one can associate a bounded analytic function $B_\mathcal{Z}$, called a Blaschke product, to such a sequence, and this function acts as an isometric divisor for any function $f\in H^p$ that vanishes on $\mathcal{Z}$. That is,  $\|f/B_{\mathcal{Z}}\|_{H^p}
=\|f\|_{H^p}$ if $f(z_j)=0$, $j=1, 2, \ldots$. In particular, Blaschke products are examples of \emph{inner} functions in the Hilbert space $H^2$, a class of functions that admits several useful characterizations. The most direct condition for $f\in H^2$ to be inner is that $|f(e^{i\theta})|=1$ at almost every point on the unit circle $\mathbb{T}=\{|z|=1\}$. Equivalently, a function $f \in H^2$ is inner if $\|f\|=1$ and the orthogonality condition 
$\langle f, z^jf\rangle_{H^2}=\int_{\mathbb{T}}f(\zeta) \overline{\zeta^jf(\zeta)}|d\zeta|=0$ is satisfied for $j=1,2,\ldots$. Yet another approach via extremal problems is quite helpful. Namely, if $\mathcal{Z}$ is a $H^2$-zero set, then the aforementioned Blaschke product solves the extremal problem $\sup\{\mathrm{Re}f(0)\colon \|f\|_{H^2}\leq 1, \,f(z_j)=0, \, j=1,2,\ldots\}$

More recently, a largely parallel theory of zero-divisors was developed for the Bergman space in the disk in the 90s, see \cite{Hbook,DSbook}. In this case, zero divisors $G$ are no longer isometric; rather, dividing a function by an associated zero divisor decreases the Bergman norm. Nevertheless, contractive divisors are inner functions for the Bergman space $A^2$ and such functions can again be characterized in terms of orthogonality conditions and as solutions to extremal problems \cite{DSbook,Hbook}.

The situation in the multiply connected setting is considerably more complicated for several reasons. For instance, the harmonic conjugate of a harmonic function in a multiply connected domain is not in general single-valued, and constructions relying on explicit computations with orthonormal bases or reproducing kernels become significantly more challenging. One idea that comes to mind for reducing the multiply connected setting to the simply connected one is to introduce nice cuts that avoid zero sets, but this cannot work. Indeed, if we construct a contractive or isometric divisor for the Bergman or Hardy cases, respectively, in the resulting simply connected domain this will, of course, provide the needed norm estimates, but though analytically continuable across the cuts, we will in general be left with a multi-valued function in the original domain.

The purpose of the present work is to survey the current state of affairs for domains of multiple connectivity, as we understand them, and to present some new contributions as well as possible directions for further investigations.

We begin by setting down some notation. Let $\Omega$ be a bounded finitely connected domain in $\mathbb{C}$, with boundary given by
\[\partial \Gg=\Gamma=\bigcup_{j=1}^n\gamma_j,\]
where each $\gamma_j$ is a rectifiable Jordan curve. We reserve $\gamma_1$ for the outer boundary curve. We denote harmonic measure for $\Omega$ with fixed base point $z_0\in \Gg$ by 
$\omega=\omega(z_0,\ \cdot, \ \Omega)$, and we write $\omega_j$ for the harmonic measure of each of the boundary components $\gamma_j$, that is, for $j=1,\ldots, n$, 
\[\Delta \omega_j=0 \quad \textrm{in}\quad \Gg, \quad \textrm{and}\quad \omega_j\ _{\big|_{\gamma_k}}=\delta_{jk}.\]
We sometimes use the shorthand notation $\omega_{z_0}$ to denote the measure $\omega(z_0, \cdot, \Gg)$.
If the $\gamma_j$ are smooth we have the representation
\[d\omega(z_0, \  \cdot,\ \Omega)=-\frac{1}{2\pi}\frac{\partial g}{\partial n}(\cdot, z_0)ds,
\]
where $g(\cdot, z_0)$ denotes the Green's function of $\Omega$ with pole at $z_0$, $\partial/\partial n$ is the derivative in the direction of the outward normal of $\Gamma$, and $ds$ denotes arclength measure.

\begin{figure}[h!]
\centering
\includegraphics[width=0.44 \textwidth]{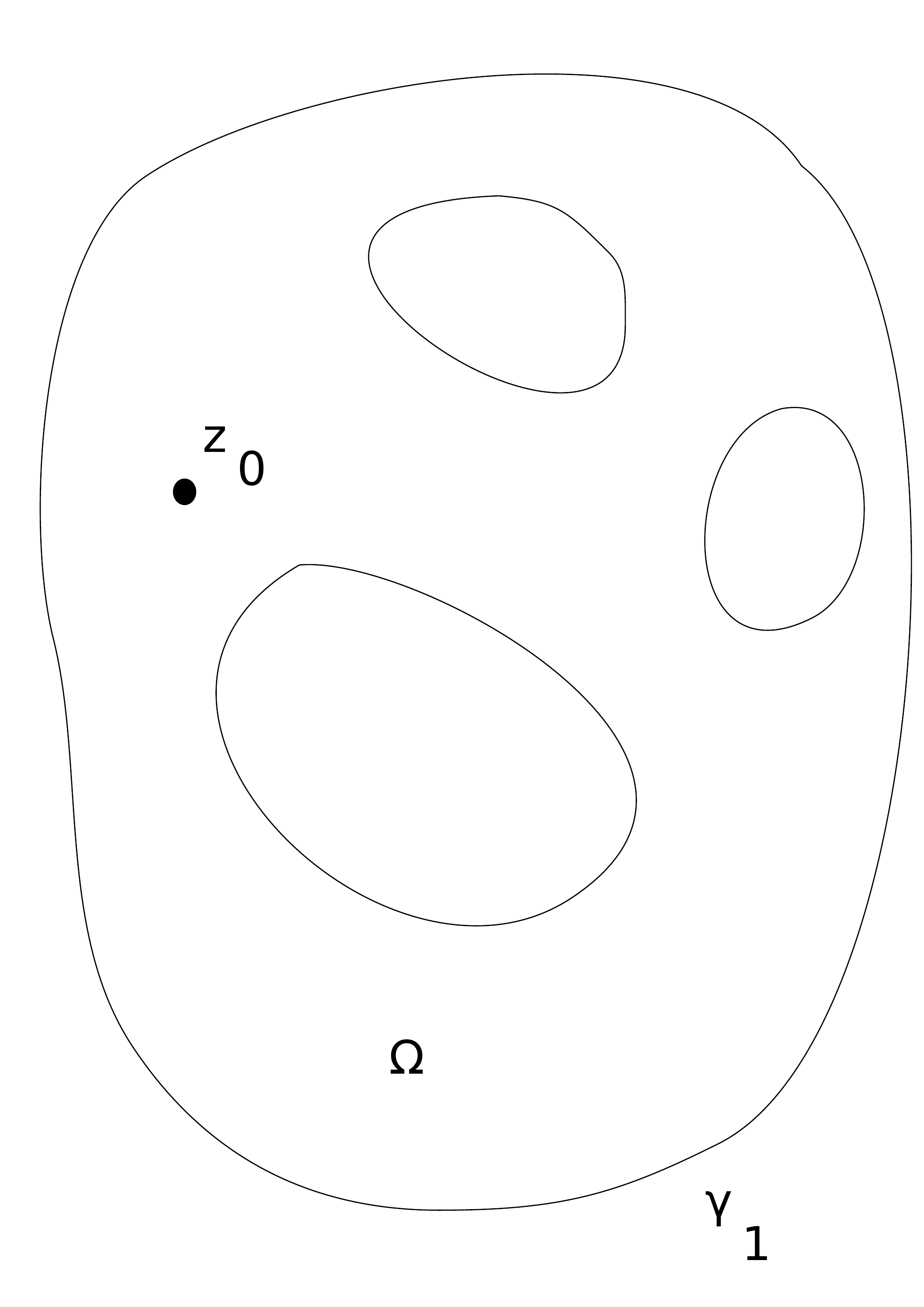}
  \caption{\textsl{A smoothly bounded multiply connected domain $\Omega$ with basepoint $z_0$ and outer boundary curve $\gamma_1$}.}
  \label{faveplots}
\end{figure}

We shall make use of approximations of a given domain $\Gg$: a regular exhaustion is a sequence $\{\Gg_j\}_{j=1}^{\infty}$ of $n$-connected domains with analytic boundaries $\{\Gamma_j\}$, such that $z_0 \in \Gg_j$, $\overline{\Gg_j}\subset \Gg_{j+1}$, and $\Gg=\bigcup_{j=1}^{\infty}\Gg_j$. See \cite{Fbook,GMbook} for background material on harmonic measure and related topics.

We are interested in certain scales of Banach spaces of analytic functions in multiply connected domains: Hardy, Bergman, and Smirnov spaces. For a fixed $p>0$, the Hardy space $H^p(\Gg)$ consists of analytic functions $f\colon \Omega\to \mathbb{C}$ with the property that, for a regular exhaustion $\{\Gg_j\}$ of $\Gg$,
\begin{equation}
\|f\|^p_{H^p(\Omega)}=\lim_{j\to \infty}\int_{\Gamma_j}|f(\zeta)|^pd\omega(z_0,\zeta, \Omega_j)<\infty.
\label{hardynorm}
\end{equation}
When $p\geq 1$, this expression defines a norm.
As usual, $f^*$ will be taken to denote the boundary values of the function $f$ initially defined in $\Gg$. Membership of a function in $H^p(\Gg)$ is preserved under change of basepoint $z_0$.
See \cite[Chapters 3 and 4]{Fbook} and \cite{Durbook} for background information on Hardy norms, including proofs of existence of boundary 
values $\omega$-almost everywhere. We remark that $H^p(\Gg)$ can also be defined in terms of harmonic majorants \cite{P,R}: $f\in H^p(\Gg)$ if there is a harmonic function $u\colon \Gg\to \mathbb{R}$ satisfying 
\[|f(z)|^p\leq u(z), \quad z\in \Gg.\] 
This definition lends itself to extensions to arbitrary domains in $\mathbb{C}$, and to Riemann surfaces \cite{Heibook}.

We shall also need some close relatives of Hardy spaces, namely the Smirnov spaces. We say that an analytic function $f\colon \Gg \to \C$ belongs to the Smirnov class $E^p(\Gg)$, $p>0$, if 
\begin{equation}
\|f\|^p_{E^p(\Gg)}=\lim_{j\to \infty}\int_{\Gamma_j}|f(\zeta)|^pds
<\infty.
\label{smirnovnorm}
\end{equation}
Here, we are integrating with respect to arclength measure $ds$ on $\Gamma_j$.
More generally, we write $E^p(\rho  ds)$ for Smirnov spaces associated with a non-negative weight function $\rho \colon \overline{\Gg} \to [0,\infty)$.
It can be shown that the modulus of any function $f\in H^p(\Gg)$ ($p \geq 1$) can be recovered from its non-tangential boundary values using Poisson integrals \cite{Fbook,Durbook},
\[f(z)=\int_{\Gamma}f^*(\zeta)d\omega(z, \zeta, \Gg).\]
A Smirnov function $f\in E^p(\Gg)$, $p\geq 1$, can be represented as a Cauchy integral of its non-tangential boundary values $f^*$,
\[f(z)=\frac{1}{2\pi}\int_{\Gamma}\frac{f^*(\zeta)}{\zeta-z}d\zeta, \quad z\in \Gg.\]
We remark that $H^p(\Gg)$ and $E^p(\Gg)$ coincide as sets if $\Gamma$ is smooth, but are in general different in rough domains. See, 
for instance, \cite{Durbook,SKhavTum1,SKhavTum2,SKhavTum3}, for more in-depth discussions of Hardy and Smirnov spaces.

We will find it useful to also consider the Nevanlinna class $N(\Gg)$ and the Smirnov class $N_{+}(\Gg)$. We recall that an analytic function $f\colon \Omega \to \mathbb{C}$ is said to belong to the Nevanlinna class if
\[\limsup_{j\to \infty}\int_{\Gamma_j}\log^+|f(\zeta)|\, d\omega(z_0, \zeta, \Omega_j) < \infty, \]
or, equivalently, if $\log^+|f|$ has a harmonic majorant in $\Gg$. If $f$ has the additional property that the family
\[\left\{\int_{\Gamma_j}\log^+|f(\zeta)|d\omega_j(z_0, \zeta, \Omega_j)\right\}_j\]
is uniformly integrable with respect to harmonic measure,
we say that $f$ belongs to the Smirnov class $N_{+}(\Gg)$. See \cite{Khav1,SKhavTum3} for more on this. Note, cf. \cite{SKhavTum3}, that $N\supset N_{+}\supset H^p$, and that $N\supset E^p$ for all $p>0$. In certain domains with rough boundaries, so-called non-Smirnov domains, $E^p$ is in general not contained in $N_+$. For $p<q$, we have $H^p\supset H^q$ and $E^p\supset E^q$.

Nevanlinna functions (and hence Hardy functions) have good non-tangential limit properties, but this is lost when we move to Bergman spaces \cite{Bellbook,Bergbook}. For $p\geq 1$, the Bergman space $A^p(\Gg)$ is the Banach space of analytic functions $f$ in $\Gg$ satisfying the norm boundedness condition
\begin{equation}
\|f\|_{A^p(\Gg)}^p=\int_{\Gg}|f(z)|^pdA(z)<\infty.
\label{bergmannorm}
\end{equation}
Here, $dA=\pi^{-1}dxdy$ denotes normalized area measure in the plane.
In this note, we shall mostly deal with the Hilbert spaces $H^2(\Gg)$ and $A^2(\Gg)$.

We mention in passing that it is possible, and sometimes useful, to define function spaces in a multiply connected domain $\Gg$ in terms of uniformizing maps by viewing the unit disk $\mathbb{D}$ as a covering surface. See \cite{Fbook} for more on this point of view; we shall not use this approach in the present paper.

We denote the collection of rational functions whose poles are off $\overline{\Gg}=\Gg \cup \Gamma$ by $R(\Gg)$. It is known \cite{Durbook,Fbook,Bellbook} that $R(\Gg)$ is dense in all the function spaces we consider, provided the boundary $\Gamma$ is smooth enough. We shall also 
consider the set $A(\Gg)$ consisting of analytic functions in $\Gg$ that are continuous on $\overline{\Gg}$; we note that $R(\Gg)$ is uniformly dense in $A(\Gg)$, again, see \cite[Chapter 4]{Fbook}.

The purpose of this note is to explore the notion of inner function and zero divisor for spaces of analytic functions in a multiply connected domain $\Gg$. Suppose we are given a zero set for a function space $\mathscr{X}$ in $\Gg$, say, the Hardy or the Bergman space. Does there exist a function $G$
in $\mathscr{X}$ that divides out zeros, that is to say, if $f  \in \mathscr{X}$ vanishes on $\mathcal{Z}$, do we have the estimate $\|f/G\|_{\mathscr{X}}\leq \|f\|_{\mathscr{X}}$ (or even equality)? If this does not hold, can we hope for a norm comparison involving a universal constant that only depends on $\Gg$ and $\mathscr{X}$?

Assuming such functions exist, do they belong to a natural class of inner functions, defined in terms of some suitable extremal problem or in terms of orthogonality conditions?

As we shall see, these questions are subtle and we are not able to give complete answers to all of them. Several difficulties arise when we leave the class of simply connected domains, both conceptual and technical.  A fundamental problem, as was mentioned earlier, is that of harmonic conjugation: given a harmonic function $h$ in $\Gg$ whose properties we understand, such as the Green's function, we often wish to form an analytic function $f=h+i\widetilde{h}$, where $\widetilde{h}$ denotes the harmonic conjugate of $h$. If $\Gg$ is simply connected, this is straight-forward, but in the multiply connected setting, the harmonic conjugate is no longer single-valued in general: $\widetilde{h}$ typically has periods around the holes of $\Gg$. For instance, $h=\log |z|$ is harmonic in any annulus $\{z\in \mathbb{C}\colon r<|z|<1\}$, but $\widetilde{h}=\arg z$ has period $2\pi$ around the inner circle $r\mathbb{T}$. Hence, we are faced with the challenge of removing such periods through appropriate modifications without losing whatever desirable properties $f=h+i\widetilde{h}$ may possess. Another notorious obstruction is that certain canonical objects like reproducing kernels are no longer zero-free in the multiply connected setting, unlike in the disk. Typically, the higher the connectivity, the more zeros appear in the kernel function, and this means that care has to be taken when dividing by kernels.

At the technical level, it is often useful to be able to perform explicit computations with orthonormal bases and to have closed expressions for canonical objects like Poisson kernels and reproducing kernels. In the disk, this is often possible, and formulas for reproducing kernels and single-point divisors are attractive. By contrast, even in very simple multiply connected domains like the annulus or circular domains, explicit expressions are often lacking, or require a more cumbersome analysis of special functions. 

To keep technicalities to a minimum, we shall usually assume that the boundary curves $\Gamma=\bigcup_j\gamma_j$ are analytic. The kinds of questions we consider in this paper could also be posed for function spaces on Riemann surfaces. There, yet another technical issue arises from topological considerations involving the handles of the surfaces. We shall mostly avoid this level of generality, and refer the reader to \cite{W,VZ,Heibook,SKhav1,SKhav2} for guidance to the Riemann surface setting.

\section{Definitions and basic properties of inner functions in multiply connected domains}\label{InnerDefs}
We shall now review several possible definitions of  ``inner functions'' in a multiply connected domain, and compare and contrast these definitions. We begin with some historical remarks. In the context of the unit disk, classical $H^p$-inner functions have played a prominent role in most treatments of function spaces in the unit disk, going back to the factorization theorems of R. Nevanlinna and V.I. Smirnov (see \cite{Durbook,Fbook,Hbook,Golbook}). For subclasses of the Nevanlinna class in the multiply connected setting, the survey by S.Ya. Khavinson and G.Ts. Tumarkin \cite{SKhavTum4} is a good starting point. The extension of the notion of Hardy spaces to multiply connected domains and Riemann surfaces can be traced back to Parreau \cite{P} and Rudin \cite{R}. These spaces have subsequently been studied by numerous authors, see for instance the references in \cite{Fbook,Khav1,Khav2}. A first construction of ``Blaschke products" in a finitely connected domain was suggested by Zmorovich \cite{Z}, and the convergence of these products was established by Tamrazov \cite{Tam}. (These works, as well as several imporant papers by Dunduchenko and Kas'yanyuk \cite{DK1,DK2}, were published in Ukrainian or Russian and unfortunately did not become widely known in the West.) A convenient construction of an analog of the Schwarz kernel, and consequently of ``Blaschke products" was presented by Coifman and Weiss \cite{CW}. Further generalizations, which will be useful in what follows, were obtained by D. Khavinson \cite{Khav1,Khav2} (see also the references therein), Kuzina \cite{K}, and S.Ya. Khavinson \cite{SKhav1,SKhav2}, the latter in the context of finite Riemann surfaces. 

For Bergman spaces in general domains in $\mathbb{C}$ we refer the reader to \cite{Bergbook,Bellbook}. The term ``inner function" in Bergman spaces in the unit disk was coined by Korenblum, following breakthroughs by Hedenmalm and by Duren, Khavinson, Shapiro, and Sundberg. These more recent developments are detailed in the books \cite{Hbook,DSbook}; more recent results on Bergman spaces in multiply connected domains can be found in \cite{AH}. As far as the authors are aware, a concept of inner functions for Bergman spaces in multiply connected domains has not been developed in a systematic way so far.

As was mentioned earlier, in the classical setting of the Hardy space of the unit disk, a function $f\in H^2(\mathbb{D})$
is said to be inner (or $H^{2}$- inner to be precise) if $\left|f^*(\zeta)\right|=1$ for almost every $\zeta \in \mathbb{T}$. 
Equivalently, $f\in H^2(\mathbb{D})$ can be declared to be inner precisely if
\begin{equation}
\|f\|_{H^2(\mathbb{D})}=1 \quad \textrm{and}\quad \langle z^jf, f\rangle_{H^2(\mathbb{D})}=0 \quad \textrm{for}\quad j=1, 2, \ldots \,.  
\label{canadinnerdef}
\end{equation}
See \cite{BKLSS,BFKSS} and the references therein for systematic treatments of this approach to inner functions for general Hilbert function spaces, as well as \cite{CMR} and references cited there for the more general Banach space case.

In the multiply connected setting, the following definition has evolved over the years (cf. \cite[Chapter 4]{Fbook} and the papers listed above).
\begin{definition}\label{multinnerconstants}
Let  $\Omega\subset \mathbb{C}$ be a bounded domain whose boundary consists of finitely many analytic curves $\gamma_1, \ldots, \gamma_n$. We say that $f\in H^2(\Omega)$ is inner if
\[\|f\|_{H^2(\Gg)}=1\quad \textrm{and}\quad 
|f^*(\zeta)|_{\big|_{\gamma_j}}=c_j \quad \textrm{for}\quad  j=1,\ldots, n\]
where $c_1,\ldots, c_n$ are positive constants.
\end{definition}
We review a construction (see \cite{VZ,Khav1}) that lends some credence to the idea that the above is a natural generalization of inner function.
\begin{example}[Generalized Blaschke products]\label{blaschkeex}
Let $z_0\in \Omega$ be fixed, and let $g(z, z_0)$ be Green's function for $G$ with pole at $z_0$. Suppose $\mathcal{Z}=\{z_j\}_{j=1}^{\infty}$ is a sequence of points in $\Omega$ such that the generalized Blaschke condition \cite{Fbook,CW,Khav1}
\[\sum_{j=1}^{\infty}g(z_j,z_0)<\infty\]
is satisfied: this means that $\mathcal{Z}$ is a zero set for $H^2(\Gg)$. 

Let $p(z)=g(z,z_0)+i\widetilde{g}(z,z_0)$ be the multi-valued analytic completion of Green's function. It is natural to try to exponentiate $p$ to get an inner function, but in order to obtain a single-valued analytic function, we need to cancel the periods of $\widetilde{g}$.
As is explained in \cite{Khav1}, there exists a non-unique choice of constants $\lambda_1,\ldots, \lambda_n\in \mathbb{R}$ such that the generalized Blaschke product
\begin{equation}
B_{\mathcal{Z}}(z)=\exp\left(-\sum_{j=1}^{\infty}p(z_j,z_0)+\sum_{k=1}^n\lambda_k(\omega_k+i\widetilde{\omega}_k)\right)
\label{blaschkeformula}
\end{equation}
is a single-valued analytic function. Furthermore, see \cite{Khav1,Khav2}, we have $B_{\mathcal{Z}}(z_j)=0$ for $j=1,2,\ldots$, and 
\[|B_{\mathcal{Z}}|=c_j=e^{\lambda_j}\quad \omega-\textrm{almost everywhere on}\quad \gamma_j.\]
\end{example}
Similarly, see \cite{Khav2}, one can construct singular inner functions.
\begin{example}[Singular inner functions]\label{singularex}
Let $\mu$ be a non-positive measure with $\mu \perp \omega_{z_0}$. We can construct a singular inner function by setting
\begin{equation}
S_{\mu}(z)=\exp\left[\frac{1}{2\pi}\int_{\Gamma}\left(\frac{\partial g}{\partial n_{\zeta}}(z,\zeta)+i\frac{\partial \widetilde{g}}{\partial n_{\zeta}}(z,\zeta)\right)d\mu(\zeta)+\sum_{j=1}^{n}\lambda_j(\omega_j(z)+i\widetilde{\omega}_j(z))\right],
\label{singularformula}
\end{equation}
by selecting constants $\lambda_1,\ldots ,\lambda_n$ in such a way that $S_{\mu}$ becomes single-valued. 

We again have $|S_{\mu }(\zeta)|=c_j$ almost everywhere with respect to $\omega_{z_0}$, but now $S_{\mu}$ is a non-vanishing inner function in $\Gg$.
\end{example}
It can be shown \cite{CW,Khav1} that any inner function in $H^2(\Gg)$ can be written as a product of a Blaschke product and a singular inner function, and this decomposition is unique up to invertible factors. We caution that there are non-constant invertible inner functions in the multiply connected setting: the functions $z^k$ ($k\neq 0$) in the annulus $\{r<|z|<1\}$ furnish examples of this.

Using the generalized inner functions from Definition \ref{multinnerconstants}, one can formulate an extension
of Beurling's theorem on invariant subspaces of $H^2(\Gg)$, once the notion of ``$z$-invariant'' is appropriately modified. This provides some justification for calling functions satisfying Definition \ref{multinnerconstants} $H^2(\Gg)$-inner.  We shall discuss this in Section \ref{InvariantSubs}.

We will now investigate to what extent the definition of inner function in Definition \ref{multinnerconstants} squares with an inner product condition as in \eqref{canadinnerdef}. In a Hilbert function space $\mathscr{X}$ in the multiply connected setting, a naive approach would be to replace the 
orthogonality condition \eqref{canadinnerdef} with the condition that 
\begin{equation}
\|f\|_{\mathscr{X}}=1 \quad \textrm{and}\quad \langle f, rf\rangle_{\mathscr{X}}=0, \quad r\in R(\overline{\Gg})\setminus \textrm{span}_{\C}\{1\}.
\label{multiconninnerdef}
\end{equation}
In this paper, we shall focus on the cases $\mathscr{X}=H^2(\Gg) $ and $\mathscr{X}=A^2(\Gg)$, the Hardy and Bergman spaces respectively.

We begin with the simplest possible case.
\begin{example}
Let $\Omega=\left\{ z:\,r<\left|z\right|<1\right\} $ be an annulus
centered at the origin. Since $\Omega$ has analytic boundary, an equivalent
norm for $H^2(\Omega)$ is obtained by replacing harmonic measure by arclength measure $ds$, and considering $L^2(\Gamma, ds)$-integrals. 

In order for $f$ to be inner in the sense of Definition \ref{multinnerconstants}, we must have that $\left|f\right|^{2}\in\mathrm{span}\left\{ \omega_{1},\omega_{2}\right\} $,
where $\omega_{1}, \omega_2$ are the harmonic measures on the outer and inner circles, that is,
\[
\omega_{1}(z, \mathbb{T}, \Gg)=\frac{\log\frac{\left|z \right|}{r}}{\log\frac{1}{r}}\quad \textrm{and}\quad \omega_{2}(z, r\mathbb{T},\Gg)=\frac{\log\left|z\right|}{\log r}.
\]
We wish to determine $\left\{ \omega_{1},\omega_{2}\right\} ^{\perp}$
in $L^{2}(\Gamma,\,ds)$. But the functions $\frac{\partial\omega_{j}}{\partial n}$,
where $n$ is the outward pointing normal, are also locally constant on
$\gamma_{j}$ since $\omega_{j}$ are radial. Hence $\left\{ \omega_{1},\omega_{2}\right\} ^{\perp}$
coincides with $\left\{ \frac{\partial\omega_{1}}{\partial n},\frac{\partial\omega_{2}}{\partial n}\right\} ^{\perp}$,
which is equal to $\mathrm{span}\left\{ \mathrm{Re}\,z^{n}:\:n\in\mathbb{Z}\right\} $.
Since $\left|f\right|^{2}$ is already real-valued, we are led to
the following proposition.
\end{example}
\begin{proposition}
Let $\Omega=\left\{ z\in\mathbb{C}\colon \,r<\left|z\right|<1\right\} $. The function $f\in H^2(\Gamma, ds)$ is inner 
in the sense that $f$ is constant a.e. with respect to arc-length
on $\mathbb{T}$ and on $r\mathbb{T}=\left\{ z\in \mathbb{C}\colon \left|z\right|=r\right\} $ 
if and only if 
\[
\int_{\Gamma}\left|f\right|^{2}z^{n}ds=0\qquad n\in\mathbb{Z}\setminus\{0\},
\]
or, in other words, if 
\[
\left\langle z^{n}f,f\right\rangle =0\qquad n\in\mathbb{Z}\setminus\{0\}.
\]
\end{proposition}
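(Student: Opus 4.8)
The plan is to reduce everything to elementary Fourier analysis on the two boundary circles. By Definition~\ref{multinnerconstants} and the discussion in the preceding Example, $f$ being inner means precisely that $|f|^2$ is (a.e.) constant on each of $\mathbb{T}$ and $r\mathbb{T}$. So I would parametrize the outer circle by $z=e^{i\theta}$ and the inner circle by $z=re^{i\theta}$, and set $\phi(\theta)=|f(e^{i\theta})|^2$ and $\psi(\theta)=|f(re^{i\theta})|^2$; since $f\in H^2(\Gamma,ds)$ these lie in $L^1$. Using that $ds=d\theta$ on $\mathbb{T}$, that $ds=r\,d\theta$ on $r\mathbb{T}$, and that $z^n=r^ne^{in\theta}$ on the inner circle, the first step is to record the identity
\[
\langle z^nf,f\rangle=\int_{\Gamma}z^n|f|^2\,ds=2\pi\bigl(\widehat{\phi}(-n)+r^{\,n+1}\widehat{\psi}(-n)\bigr),\qquad n\in\mathbb{Z},
\]
where $\widehat{\phi}(k)$ and $\widehat{\psi}(k)$ are the usual Fourier coefficients. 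The point to isolate is that $f$ is inner exactly when $\phi$ and $\psi$ are a.e.\ constant, i.e.\ when $\widehat{\phi}(k)=\widehat{\psi}(k)=0$ for every $k\neq 0$.

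The forward implication is then immediate: if $f$ is inner, then $\widehat{\phi}(-n)=\widehat{\psi}(-n)=0$ for every $n\neq 0$, so the displayed quantity vanishes for all $n\in\mathbb{Z}\setminus\{0\}$, which is exactly the asserted orthogonality.

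For the converse I would assume $\langle z^nf,f\rangle=0$ for all $n\neq 0$ and deduce that all nonzero Fourier coefficients of $\phi$ and $\psi$ vanish. Here lies the step I expect to be the main obstacle: a single value of $n$ gives only one scalar equation relating the two unknowns $\widehat{\phi}$ and $\widehat{\psi}$ at a given frequency, an underdetermined situation, so one must pair the frequencies $n$ and $-n$. Fixing $m\geq 1$, taking $n=-m$ yields $\widehat{\phi}(m)+r^{\,1-m}\widehat{\psi}(m)=0$, while taking $n=m$ yields $\widehat{\phi}(-m)+r^{\,1+m}\widehat{\psi}(-m)=0$; since $\phi,\psi$ are real we have $\widehat{\phi}(-m)=\overline{\widehat{\phi}(m)}$ and $\widehat{\psi}(-m)=\overline{\widehat{\psi}(m)}$, so conjugating the latter gives $\widehat{\phi}(m)+r^{\,1+m}\widehat{\psi}(m)=0$. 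This is a linear system in $(\widehat{\phi}(m),\widehat{\psi}(m))$ with coefficient determinant $r^{\,1+m}-r^{\,1-m}=r\bigl(r^{m}-r^{-m}\bigr)$, which is nonzero precisely because $0<r<1$ and $m\geq 1$. Hence $\widehat{\phi}(m)=\widehat{\psi}(m)=0$, and by conjugation the same holds at $-m$; letting $m$ run over the positive integers forces $\phi$ and $\psi$ to be a.e.\ constant, so $f$ is inner. The essential quantitative input is the strict inequality $r<1$, which makes the two circles register the frequencies $\pm m$ with genuinely different weights and so renders the $2\times 2$ system nondegenerate; it is worth noting that analyticity of $f$ enters only through the finiteness of $\int_{\Gamma}|f|^2\,ds$ guaranteeing $\phi,\psi\in L^1$, the equivalence itself being a statement about real $L^1$ functions on the two circles.
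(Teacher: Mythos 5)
Your proof is correct, but it takes a genuinely different route from the paper's. The paper derives the proposition from the general machinery it then reuses for arbitrary multiply connected domains in Proposition \ref{Schottkyprop}: the orthogonality condition says that $(|f|^2-1)\,ds$ annihilates $\mathrm{Re}[R(\Omega)]$, the annihilator of $\mathrm{Re}[R(\Omega)]$ is known (\cite[Theorem 2.3]{Fbook}) to be spanned by the measures $\frac{\partial\omega_j}{\partial n}\,ds$, and the special feature of the annulus with arclength is that the densities $\frac{\partial\omega_j}{\partial n}$ are themselves locally constant, so membership of $|f|^2$ in their span is the same as $|f|^2$ being locally constant (reading, as you do, ``constant'' as ``of constant modulus,'' consistent with Definition \ref{multinnerconstants}). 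You bypass all of this with a self-contained Fourier computation on the two circles: the identity $\langle z^nf,f\rangle=2\pi\bigl(\widehat{\phi}(-n)+r^{\,n+1}\widehat{\psi}(-n)\bigr)$ is right (with $ds=d\theta$ on $\mathbb{T}$ and $ds=r\,d\theta$ on $r\mathbb{T}$), and your pairing of the frequencies $\pm m$ together with the reality of $\phi,\psi$ yields a $2\times 2$ system with determinant $r^{\,1+m}-r^{\,1-m}\neq 0$, which is exactly the nondegeneracy the converse needs; you correctly identify that a single frequency alone gives an underdetermined equation. What each approach buys: yours is elementary, needs no density of $R(\Omega)$ nor any annihilator description, and makes the quantitative role of $r<1$ explicit; the paper's argument, though it leans on external results, is the one that generalizes to higher connectivity and to other measures, and the example is there precisely to exhibit the coincidence (local constancy of $\partial\omega_j/\partial n$) that fails elsewhere --- a failure your computation also reflects, since replacing $ds$ by a boundary density that is not rotation-invariant on each circle would destroy the frequency-by-frequency decoupling on which your argument rests.
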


Inspired by this simple example, let us return to the general setting. Unfortunately, the results are somewhat disappointing.

%\begin{proposition}
%Let $\mathcal{G}\subset \mathbb{C}$ be a bounded domain with boundary $\Gamma=\cup_{j=1}^{n}\gamma_j$, where each $\gamma_j$ is a closed analytic curve. Fix $a_1, \ldots a_{n-1}$, where each $a_j$ lies in the bounded connected 
%component of $\mathbb{C}\setminus\gamma_j$.
%
%Then $f\in H^2(\mathcal{G})$ is inner if and only if
%\[\langle f, rf \rangle=0\]
%for every $r\in \mathrm{span}\left\{z^{k-1},(z-a_{j})^{-k}\colon k\ge2,\,j=1,\ldots, n-1 \right\} $.
%\end{proposition}
\begin{proposition}\label{Schottkyprop}
Let $\Omega\subset \mathbb{C}$ be a bounded domain with boundary $\Gamma=\cup_{j=1}^{n}\gamma_j$, where each $\gamma_j$ is a closed analytic curve.

Then if $f \in H^2(\Gamma, ds)$ satisfies the orthogonality condition \eqref{multiconninnerdef}, we have
\[|f(z)|^2=1+\sum_{j=1}^{n-1}\lambda_j \frac{\partial \omega_j}{\partial n}, \]
for some real constants $\lambda_1, \ldots,  \lambda_{n-1}$.

In particular, a function satisfying condition \eqref{multiconninnerdef} is not necessarily locally constant on $\Gamma$ and hence not necessarily inner in the sense of Definition \ref{canadinnerdef}.
\end{proposition}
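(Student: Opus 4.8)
The plan is to read condition \eqref{multiconninnerdef} as a statement about the boundary density $|f|^2$ on $\Gamma$ and then to compute the relevant orthogonal complement in $L^2(\Gamma, ds)$. Expanding the pairing, $\langle f, rf\rangle = \int_\Gamma |f|^2\,\overline{r}\,ds$, so the hypothesis says that the real, nonnegative function $|f|^2$ is $L^2(\Gamma, ds)$-orthogonal to $\overline r$, hence to both $\mathrm{Re}\,r$ and $\mathrm{Im}\,r$, for every nonconstant $r \in R(\overline{\Omega})$. Together with the normalization $\|f\| = 1$, the proposition will follow once I identify the orthogonal complement of $\mathrm{span}_{\R}\{\mathrm{Re}\,r, \mathrm{Im}\,r : r \in R(\overline{\Omega})\}$ with $\mathrm{span}_{\R}\{\partial\omega_1/\partial n, \dots, \partial\omega_{n-1}/\partial n\}$, the constant term being fixed afterwards by the normalization.

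First I would verify one inclusion: each $\partial\omega_j/\partial n$ annihilates $R(\overline{\Omega})$. Since any $r \in R(\overline{\Omega})$ is analytic, hence harmonic, across $\overline{\Omega}$, Green's second identity applied to $r$ and $\omega_j$ gives
\[ \int_\Gamma\Bigl(r\,\frac{\partial\omega_j}{\partial n} - \omega_j\,\frac{\partial r}{\partial n}\Bigr)\,ds = 0. \]
Because $\omega_j = \delta_{jk}$ on $\gamma_k$, the second term reduces to $\int_{\gamma_j}(\partial r/\partial n)\,ds$, the flux of $r$ across $\gamma_j$. By the Cauchy--Riemann equations this flux equals the increment of the harmonic conjugate around $\gamma_j$, which vanishes because $r$ is single-valued; therefore $\int_\Gamma r\,(\partial\omega_j/\partial n)\,ds = 0$, and $\partial\omega_j/\partial n$ is orthogonal to $\mathrm{Re}\,r$ and to $\mathrm{Im}\,r$.

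The harder half is completeness: showing that these functions exhaust the complement. Here I would represent the period functionals $\Lambda_j(u) = \int_{\gamma_j}(\partial u/\partial n)\,ds$ on harmonic $u$ by the same Green computation as $\Lambda_j(u) = \int_\Gamma u\,(\partial\omega_j/\partial n)\,ds$, so that each $\Lambda_j$ extends to a bounded functional on $L^2(\Gamma, ds)$ represented by $\partial\omega_j/\partial n$. The real parts of functions in $R(\overline{\Omega})$ are precisely the harmonic boundary data with vanishing periods, and invoking the density of $R(\overline{\Omega})$ together with solvability of the Dirichlet problem, their closure equals $\bigcap_j \ker\Lambda_j = \mathrm{span}\{\partial\omega_j/\partial n\}^\perp$. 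Passing to orthogonal complements then yields $\mathrm{span}\{\partial\omega_j/\partial n\}_{j=1}^{n-1}$, where I have used $\sum_{j=1}^n \partial\omega_j/\partial n = \partial(1)/\partial n = 0$ to pass from $n$ to $n-1$ independent functions. I expect this density-and-dimension step --- making rigorous that rationals generate all zero-period data in the $L^2$ closure, and correctly bookkeeping the one-dimensional constant direction pinned by $\|f\| = 1$ --- to be the main obstacle.

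Combining the two halves gives $|f|^2 = 1 + \sum_{j=1}^{n-1}\lambda_j\,\partial\omega_j/\partial n$ for real $\lambda_j$. For the concluding remark I would produce a domain in which some $\partial\omega_j/\partial n$ fails to be locally constant on $\Gamma$ --- for instance a circular domain bounded by non-concentric circles, where the radial symmetry exploited in the annulus example is lost --- so that a nonzero choice of the $\lambda_j$ yields a density $|f|^2$ that is not constant on the individual curves $\gamma_k$. Such an $f$ satisfies \eqref{multiconninnerdef} yet fails Definition \ref{multinnerconstants}. When $n = 1$ the sum is empty and one recovers $|f|^2 = 1$, i.e. the classical inner condition.
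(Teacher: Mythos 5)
Your argument is correct and follows the same route as the paper's proof: rewrite the orthogonality condition as the statement that $|f|^2-1$ annihilates $\mathrm{Re}[R(\overline{\Omega})]$ in $L^2(\Gamma,ds)$, and identify that annihilator with $\mathrm{span}\{\partial\omega_1/\partial n,\ldots,\partial\omega_{n-1}/\partial n\}$. The only difference is that the paper simply cites \cite[Theorem 2.3]{Fbook} for this annihilator description, whereas you supply the easy inclusion via Green's identity and the vanishing of conjugate periods of single-valued rational functions, and correctly sketch the completeness half via the period functionals $\Lambda_j$ and density of $R(\overline{\Omega})$ --- both halves are sound.
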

\begin{proof}
Suppose $f\in H^2(\Gg)$ satisfies \eqref{multiconninnerdef}. Then
\begin{equation}
\int_{\Gamma}(|f|^2-1)\overline{r}(z)ds=0
\label{Rortho}
\end{equation}
for any function $r\in R(\Gg)$. Since $|f|^2$ is real, this implies that $(|f|^2-1)\perp \mathrm{Re}[R(\Gg)]$, the set of real parts of elements of $R(\Gg)$.

However, the description of measures $\Gamma$-orthogonal to $\mathrm{Re}[R(\Gg)]$ is well-known, see \cite[Theorem 2.3]{Fbook}. From the proof, we extract that the $n$ linearly independent measures spanning $\mathrm{Re}[R(\Gg)]^{\perp}$ are of the form
\[\frac{\partial \omega_j}{\partial n}ds, \quad j=1, \ldots, n-1.\]
Thus, in light of \eqref{Rortho} we conclude that $|f(\zeta)|^2=1+\sum_{j}\lambda_j\frac{\partial \omega_j}{\partial n}(\zeta)$, as desired.
\end{proof}
Examining the proof of Proposition \ref{Schottkyprop}, it becomes apparent that the simple orthogonality condition in the case of $H^2(\Gamma, ds)$ of the annulus is a result of the fact that the functions $\partial \omega_j/\partial n$ are themselves constant on the boundary components. Bearing this in mind, and working out the argument for the standard Hardy space $H^2(\Gg)$ of the annulus (with harmonic measure instead of arclength), we find that $f\in H^2(\Gg)$ satisfies the orthogonality condition \eqref{multiconninnerdef} in $L^2(\Gamma, d\omega_{z_0})$ precisely if 
\[|f(\zeta)|^2= 1+\lambda_j\frac{\frac{\partial \omega_j}{\partial n}(\zeta)}{\frac{\partial g(\zeta, z_0)}{\partial n_\zeta}}, \quad \zeta \in \Gamma, \quad j=1, 2.\]
for some real $\lambda_1, \lambda_2$.  This identity does not imply that $|f(\zeta)|_{\Big| \Gamma}$ is locally constant on boundary components. Thus, even in this case being inner is not the same as satisfying the orthogonality condition if we insist upon the standard norm involving $\omega_{z_0}$.

Switching to the standard Hardy space $H^2(\Gg)$ and reworking the proof above, we conclude that $f\in H^2(\Gg)$ satisfying the orthogonality condition implies that $|f|^2$ is a linear combination of Schottky functions \cite[Chapter 6]{Golbook},
\[|f(\zeta)|^2=1+\sum_{1}^{n-1}\lambda_j s_j(\zeta), \quad \textrm{where}\quad s_j=\frac{\partial \omega_j}{\partial n_{\zeta}}(\zeta)\big/\frac{\partial g}{\partial n_{\zeta}}(\zeta,z_0).\]

 We take a brief detour here to point out a connection with an interesting and non-trivial question in free boundaries, 
known as the Vekua problem \cite{BKhav}. This is the overdetermined problem of characterizing the domains $\Omega$ for which there exists a non-zero solution to the overdetermined boundary value problem ($j=1,2)$
\begin{equation}
\left\{\begin{array}{cc}\Delta u=0 & \textrm{in}\quad  \Omega\\
u=c_j, \,\, \partial u/\partial n=d_j & \textrm{on} \quad \gamma_j \end{array}\right.,
\label{vekua}
\end{equation}
where $\{c_j\}_{j=1}^2$ and $\{d_j\}_{j=1}^2$ are constants. It has been shown \cite[Theorem 5.5]{KhavSV} that among doubly connected domains satisfying mild smoothness conditions, only annuli admit solutions. It was  conjectured in \cite{KhavSV} that in domains of connectivity three and higher, the Vekua problem has no solutions at all. 

Returning to our previous discussion, we note that the computation carried out for arclength measure in the annulus is valid only in that setting, and fails in all other doubly connected domains. Turning to Bergman spaces $A^p(\Gg)$ in multiply connected domains, we do not know of any working definition of inner function that is as simple as that in Definition \ref{multinnerconstants} or the orthogonality condition \eqref{canadinnerdef}, while at the same time leading to a wide enough class of functions to act as zero-divisors and generators. Indeed, in the now classical setting of $A^p(\mathbb{D})$, the analogous requirement that $|G(\zeta)|=1$ for almost every $\zeta \in \mathbb{T}$ does not characterize a satisfactory class of ``inner functions". Instead, the usual approach to Bergman-inner functions in the disk is in terms of certain extremal problems. We shall discuss this approach next.

\section{Inner functions, extremal problems, and weighted reproducing kernels}\label{ExtremalProbs}
Another way of identifying inner functions, or at least Blaschke products
(in Hardy spaces) and contractive divisors (in Bergman spaces), is
to view them as solutions to certain extremal problems. For a fixed basepoint $z_0\in \Gg$ and a point $z_1\in \Gg\setminus\{z_0\}$ 
we pose the following two extremal problems for $\mathscr{X}=H^2(\Gg)$ or $A^2(\Gg)$: 
\begin{equation}
\sup\left\{ \left|f(z_{0})\right|\colon f(z_{1})=0,\,\left\Vert f\right\Vert _{\mathscr{X}}=1\right\} \label{eq:extprob1}
\end{equation}
and 
\begin{equation}
\inf\left\{ \left\Vert g\right\Vert _{\mathscr{X}}\colon g(z_{0})=1,\,g(z_{1})=0\right\} .\label{eq:extprob2}
\end{equation}
These problems are actually equivalent since if $F$ is an extremal function
for (\ref{eq:extprob1}), then $G(z)=\frac{F(z)}{F(z_{0})}$
is an extremal problem for (\ref{eq:extprob2}).

We now carry out a standard variational argument. Consider functions $f\in A(\Gg)$
such that $f(z_{0})=0.$ Then if $G$ is an extremal function for
(\ref{eq:extprob2}), and $\lambda\in\mathbb{C},$ then the function $G_{\lambda}:=G\cdot(1+\lambda f)$
is a contender. In particular, $G_{\lambda}(z_{1})=0$, and $G_{\lambda}(z_{0})=1$.
Then since $G$ is extremal for (\ref{eq:extprob2}), we have that
\begin{align*}
\|G\|^{2}_{\mathscr{X}}&\leq \| G_{\lambda}\|^{2}_{\mathscr{X}}\\&=\left\langle G(1+\lambda f),G(1+\lambda f)\right\rangle_{\mathscr{X}} \\
&=\|G\|^{2}_{\mathscr{X}}+2\mathrm{Re}\left\langle G\lambda f,G\right\rangle_{\mathscr{X}} +\mathcal{O}\left(\lambda^{2}\right).
\end{align*}
Therefore we must have $\left\langle Gf,G\right\rangle_{\mathscr{X}} =0$, since otherwise we
could find a $\lambda$ such that $\| G_{\lambda}\|_{\mathscr{X}}^{2}<\|G\|_{\mathscr{X}}^{2}$.
Taking any $F\in A(\Omega),$ and letting $f:=F(z)-F(z_{0})$, we arrive at the condition 
\begin{equation}
\left\langle G\left(F-F(z_{0})\right),G\right\rangle_{\mathscr{X}}=0.
\label{eq:OrthoFact}
\end{equation}
Without loss of generality we may also assume that $\| G\|_{\mathscr{X}} =1$.

We now specialize to the case $\mathscr{X}=H^2(\Gg)$. Then (\ref{eq:OrthoFact}) implies that, for all $F\in A(\Omega)$,
\begin{equation}
\int_{\Gamma} F\left|G\right|^{2}d\omega_{z_{0}}=F(z_{0}).\label{eq:ReproFact}
\end{equation}
Hence, we deduce that
\[
\int_{\Gamma}\left(\left|G\right|^{2}-1\right)ud\omega_{z_{0}}=0
\]
for all $u=\mathrm{Re}F$, where $F\in A(\Omega)$. This requirement unfortunately does not imply that the extremal $G$ is locally constant on boundary components: as in the previous section, we only deduce that $|G|^2-1$ can be written as a linear combination of $n-1$ generically non-constant functions.

We now turn to the Bergman space, that is, the case $\mathscr{X}=A^2(\Gg)$. The condition \eqref{eq:OrthoFact} now implies that
\[\int_{\Gg}(|G(z)|^2-1)u(z) \,dA(z)=0,\]
again for all $u=\mathrm{Re}F$, where $F\in A(\Omega)$. Superficially, this condition looks very similar to the definition of a Bergman-inner function in the setting of the unit disk, see \cite[Chapter 3]{Hbook} and \cite{DSbook}. The difference between the simply connected and multiply connected cases
is that in the simply connected case, the space $\mathrm{Harm}(\overline{\Gg})$
of functions harmonic in $\Gg$ and continuous in $\overline{\Gg}$ simply consists of real parts of functions in $A(\Gg)$, whereas in the multiply connected case $\mathrm{Harm}(\overline{\Gg})=\mathrm{Re}[A(\Gg)]\oplus \mathscr{N}$,
where again $\dim \mathscr{N}=n-1$. Unlike in the Hardy case, a concrete description of the space $\mathscr{N}$ is not immediately apparent for general $\Gg$. In the the annulus $\Gg=\{r<|z|<1\}$, one checks that $\mathscr{N}=\mathrm{span}\{\log |z|-c_0\}$, where $c_0=2\pi \int_{r}^1r\log r dr$. In general, $\mathscr{N}$ is given by an $n-1$ dimensional subspace defined in terms of projections of harmonic measures. More specifically, we have
\[|G(z)|^2-H(z,z_0)\perp_{L^2(\Gg)} \mathrm{Re} A(\Gg)\]
where $H(z,z_0)$ denotes the reproducing kernel for $L^2$-integrable harmonic functions in $\Gg$. (We have $H(z,z_0)=2\mathrm{Re}k(z,z_0)-1$, where $k$ denotes the usual Bergman kernel, see \cite{DSbook}.)
Thus, we have the representation
\begin{equation}
|G(z)|^2=H(z,z_0)+\sum_{j=1}^{n-1}\lambda_j\nu_j(z)+\Delta \phi(z),
\label{bergmandecomp}
\end{equation}
where $\nu_j(z)=\omega_j(z)-P_{A^2(\Gg)}\omega_j(z)$, $P_{A^2(\Gg)}\colon L^2(\Gg)\to \mathrm{Re}(A^2(\Gg))$ being the orthogonal projection, and $\phi\in W_0^{1,2}(\Gg)$. (Recall that, by Weyl's lemma, the $L^2$ closure of $\Delta \phi$, $\phi \in C^{\infty}_0(\Gg)$, the Sobolev space $W^{1,2}_0(\Gg)$, is precisely the annihilator of square integrable harmonic functions in $\Omega$, see \cite{DKSSPac,DSbook}.)
At the moment it is not clear to us, however, how to extract a more concrete representation (e.g. in terms of an area-orthonormal basis) for $|G|$ from this.

To summarize, we have found that solutions to the extremal problems \eqref{eq:extprob1} and \eqref{eq:extprob2} do not in general have constant modulus on $\gamma_j$, and consequently, are not $H^2(\Gg)$-inner in the sense of Definition \ref{multinnerconstants}. For the Hardy space at least, we have an explicit representation of the defect space as the span of Schottky functions,
\[\mathscr{N}=\mathrm{Harm}(\overline{\Gg})\ominus_{L^2(\Gg, d\omega)} \mathrm{Re}[A(\Gg)]=\mathrm{span}\{s_j,\,j=1,\ldots, n-1\},\] 
but in the Bergman space, a representation of $\mathscr{N}=\mathrm{Harm}(\overline{\Gg})\ominus_{L^2(\Gg)} \mathrm{Re}[A(\Gg)]$ that is equally transparent is not currently available to us.

As was mentioned in Section \ref{InnerDefs}, an important reason for studying inner functions is that they arise as generators of invariant subspaces (at least in instances where these invariant subspaces are singly generated). This is the content of Beurling's invariant subspace theorem for $H^p$. Obtaining such generators by solving extremal problems was the approach taken by Hedenmalm in his identification of contractive divisors for $A^2$, and these contractive divisors were later identified as Bergman-inner functions. While solutions to extremal problems in multiply connected domains need not be inner functions in the sense of Definition \ref{multinnerconstants}, we have not yet explained how extremals relate to generators of invariant subspaces, in particular in $H^2(\Gg)$, where an analog of Beurling's theorem is known to hold. 
We will return to this discussion in Section  \ref{InvariantSubs}. 

First, we need to achieve a better understanding of solutions to extremal problems in the multiply connected setting. A basic obstruction is the presence of extraneous zeros: these are points in $\Gg\setminus \{z_1\}$ where $G$, the solution to the extremal problem \eqref{eq:extprob1}, vanishes. Even in radially weighted Bergman spaces in the unit disk, extraneous zeros may be present \cite{HZ,Weir}, and in the multiply connected setting, the Bergman kernel function itself has zeros whose number depend on the connectivity of $\Gg$. 

Let us discuss reproducing kernels next. Since point evaluation is always a bounded linear functional for the Hardy space $H^2(\Gg)$ and the Bergman space $A^2(\Gg)$, making these spaces reproducing kernel Hilbert spaces, we might hope to use kernel methods to study inner functions.

We start, first, with the Smirnov space $E^{2}(\Omega)$, although we do assume that $\Gamma$ is smooth. It is know, see \cite[p. 35]{SKhav3}, that the reproducing kernel for $E^{2}(\Omega)$
is given by the expression
\begin{equation}
K(z,\zeta)=F_{\zeta}^{*}(z)L(z,\zeta), \quad \zeta \in \Omega.
\label{repkernelformula}
\end{equation}
Here, $F_{\zeta}^{*}$ is the extremal function in the Schwarz lemma
problem
\[
\sup\left\{ \left|f'(\zeta)\right|:\,f\in H^{\infty}\left(\Omega\right),\,\left\Vert f\right\Vert _{\infty}\le1\right\} ;
\]
the function $F^*$ is also known as the Ahlfors function \cite{SKhav3,Bellbook}. The second factor in \eqref{repkernelformula} is given by
\[
L\left(z,\zeta\right)=\frac{1}{2\pi}\frac{\Phi\left(z,\zeta\right)}{z-\zeta},
\]
where 
\[
\Phi\left(z,\zeta\right)=\sqrt{1-2\pi i\left(z-\zeta\right)^{2}\varphi^{*}(z)},
\]
is single-valued in $\Gg$ (see \cite[p.39]{SKhav3}), and $\varphi^{*}$ is the extremal function for the dual problem
\[
\inf_{\varphi\in E^{1}\left(\Omega\right)}\int_{\Gamma}\left|\frac{1}{2\pi i\left(z-\zeta\right)^{2}}-\varphi(z)\right|ds(z).
\]

It is known (cf. \cite[Thm 8.1]{SKhav3}) that $F_{\zeta}^{*}$
has $n$ zeros in $\Omega,$ including one at $\zeta$. It is
also known that $\Phi$ has no zeros in $\Omega$ (again, cf. \cite[pp. 39-40]{SKhav3}).
It follows then, by the definitions of $F_{\zeta}^{*}$ and $L$ that
for any fixed $\zeta\in\Omega$, the kernel $K(z,\zeta$) has precisely
$n-1$ zeros in $\Omega$. We now prove the following proposition.
\begin{proposition}
\label{proposition:Weighted Smirnov}For any non-negative $\rho$ having
$\rho\in L^{1}(\Gamma,ds)$ and  $\log \rho \in L^1(\Gamma,ds)$, and any fixed $\zeta\in\Omega$, the reproducing
kernel $k_{\rho}(z,\zeta)$ of $E^2(\rho ds)$ has precisely $n-1$
zeros in $\Omega$.
\end{proposition}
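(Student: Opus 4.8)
The plan is to transfer the problem to the \emph{unweighted} Smirnov space $E^2(ds)$, whose reproducing kernel $K(z,\zeta)=F_\zeta^*(z)L(z,\zeta)$ we have already seen to possess precisely $n-1$ zeros in $\Omega$, and to carry out this transfer through multiplication by a zero-free analytic function, so that the zero count is preserved. The hypotheses $\rho\in L^1(\Gamma,ds)$ and $\log\rho\in L^1(\Gamma,ds)$ are exactly what is needed to manufacture such a multiplier. First I would let $u$ be the harmonic extension into $\Omega$ of the boundary data $\tfrac12\log\rho$ (finite $\omega$-almost everywhere thanks to $\log\rho\in L^1$) and attempt to form the outer-type function $h=\exp(u+i\tilde u)$, so that $|h|^2=\rho$ on $\Gamma$.

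The first genuine difficulty is precisely the harmonic-conjugation obstruction emphasized in the Introduction: $\tilde u$ has periods around the holes $\gamma_2,\dots,\gamma_n$, so $h$ need not be single-valued. I would cancel these periods by adding a real linear combination $\sum_{k=2}^{n}\mu_k\omega_k$ of harmonic measures to $u$ and solving for the $\mu_k$ against the invertible (symmetric) period matrix $\bigl(\int_{\gamma_j}\partial_n\omega_k\,ds\bigr)$. Since $\omega_k\equiv\delta_{jk}$ on $\gamma_j$, this restores single-valuedness at the cost of multiplying $|h|^2$ by a positive constant $c_j$ on each component; these constants cannot be removed, as they encode the conjugate periods. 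The upshot is a single-valued, zero-free $h$ with $h,1/h$ in the Smirnov class and $|h|^2=c_j\rho$ on $\gamma_j$. Multiplication $f\mapsto hf$ is then an isometry of $E^2(\rho\,ds)$ \emph{onto} $E^2(\sigma\,ds)$, where $\sigma=1/c_j$ on $\gamma_j$ is constant on each boundary component, and the standard unitary transfer of reproducing kernels yields
\[
k_\rho(z,\zeta)=\frac{k_\sigma(z,\zeta)}{h(z)\,\overline{h(\zeta)}}.
\]
Because $h$ is zero-free in $\Omega$, the zeros of $k_\rho(\cdot,\zeta)$ coincide with those of $k_\sigma(\cdot,\zeta)$, so it suffices to count the latter.

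It remains to treat the weight $\sigma$ that is a positive constant on each $\gamma_j$, and this I expect to be the main obstacle, since here the period constants can no longer be absorbed by a further single-valued multiplier. My plan is to count via the argument principle, $\#\{k_\sigma(\cdot,\zeta)=0\}=\tfrac{1}{2\pi}\Delta_\Gamma\arg k_\sigma(\cdot,\zeta)$, and to show that the total change of argument around $\Gamma$ is unchanged from the unweighted case. The key point is that the weighted reproducing kernel satisfies a boundary relation analogous to the one underlying the factorization $K=F_\zeta^*L$ in \cite{SKhav3}, in which the modulus-squared weight $\sigma$ enters as a real, strictly positive factor; since $\sigma>0$ contributes nothing to the winding, the argument of $k_\sigma$ along each $\gamma_j$ winds exactly as in the unweighted problem.

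Equivalently, and perhaps more robustly, I would connect $\sigma$ to a genuinely constant weight through a family $\sigma_s$, $s\in[0,1]$, of mutually comparable constant-on-components weights. Along this finite-dimensional deformation the kernels $k_{\sigma_s}$ vary continuously, and, using the uniform comparability of the weights to a constant to obtain uniform non-vanishing of $k_{\sigma_s}(\cdot,\zeta)$ in a collar neighborhood of $\Gamma$, no zero can escape to the boundary; hence the integer $\tfrac{1}{2\pi}\Delta_\Gamma\arg k_{\sigma_s}(\cdot,\zeta)$ is constant in $s$. At the endpoint where the weight is constant, the space is $E^2(ds)$ up to a positive scalar, whose kernel has precisely $n-1$ zeros; therefore so does $k_\sigma$, and consequently $k_\rho$. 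The delicate step throughout is the control of the kernel near $\Gamma$ — either verifying the weighted boundary relation or establishing the uniform boundary non-vanishing along the deformation — which is exactly where the positivity of the weight does its essential work.
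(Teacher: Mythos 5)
Your first step is the right transfer principle and matches the paper's: if $\rho=|h|^2$ on $\Gamma$ for a single-valued, zero-free $h$ with $h,1/h$ of Smirnov type, then $k_\rho(z,\zeta)=K(z,\zeta)/\bigl(h(z)\overline{h(\zeta)}\bigr)$ and the zero count is inherited from the unweighted kernel. The gap is in your claim that the period-cancelling constants ``cannot be removed.'' They can, and this is exactly the content of the result the paper invokes, \cite[Thm.~4]{Khav2} (going back to \cite{SKhavTum3}): every non-negative $\rho$ with $\rho,\log\rho\in L^1$ equals $|g|^2$ a.e.\ on $\Gamma$ for some single-valued $g\in H^2(\Gg)$ with no zeros in $\Omega$. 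The mechanism you are missing is that the periods of the conjugate need not be killed by adding $\sum_k\mu_k(\omega_k+i\widetilde\omega_k)$ (which does distort the boundary modulus by $e^{\mu_k}$ on $\gamma_k$); instead one multiplies by a singular inner factor built from a suitably placed non-positive singular measure. Its Green potential vanishes a.e.\ on $\Gamma$, so the boundary modulus is untouched, while its conjugate periods sweep out enough of $\mathbb{R}^{n-1}$ modulo $2\pi\mathbb{Z}^{n-1}$ to cancel the obstruction. With that theorem in hand the proof is two lines and your entire second stage is unnecessary.

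That matters because the second stage is where your argument does not close. The deformation scheme needs two things you only assert: a boundary relation for the weighted kernel $k_\sigma$ analogous to $K=F^*_\zeta L$, and uniform non-vanishing of $k_{\sigma_s}(\cdot,\zeta)$ in a collar of $\Gamma$ along the whole homotopy. Uniform comparability of the weights gives equivalence of norms and hence locally uniform convergence of the kernels on compact subsets of $\Omega$, so Hurwitz controls zeros in the interior; but it gives no pointwise lower bound near $\Gamma$, and ruling out zeros escaping to the boundary is precisely the hard part of the problem, not a consequence of positivity of the weight. As written, the constant-on-components case is reduced to an unproved claim that is essentially as deep as the proposition itself. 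Either prove the boundary non-vanishing honestly (e.g.\ by extending the $F^*_\zeta L$ factorization to these weights), or, better, replace the whole second half by the citation above.
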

\begin{proof}
First assume that $\rho=\left|g\right|^{2}$, where $g\in A(\Omega)$
does not vanish in $\overline{\Omega}$. Then, trivially, we have  
\[
k_{\rho}(z,\zeta)=\frac{K(z,\zeta)}{g(z)\overline{g(\zeta)}}.
\]
Indeed, for any $f\in H^{\infty}$,
\begin{align*}
 \int_{\Gamma}f(\zeta)k_{\rho}(z,\zeta)\rho(\zeta)ds(\zeta)
&=  \int_{\Gamma}f(\zeta)\frac{K(z,\zeta)}{g(z)\overline{g(\zeta)}}g(\zeta)\overline{g(\zeta)}ds(\zeta)\\&=  \frac{1}{g(z)}f(z)g(z)\\&=f(z).
\end{align*}
Since $H^{\infty}$ is dense in $E^2(\rho ds)$, this proves the claim.

Now, by \cite[Thm. 4]{Khav2}, any function on $\Gamma$ satisfying $\rho(\zeta)\ge 0$ for $\zeta \in \Gamma$ and having $\rho, \log\rho\in L^{1}(\Omega)$
is equal almost everywhere to $\left|g\right|^{2}$ for some function $g\in H^{2}(\mathcal{G})$
that does not vanish in $\Omega$. The same argument as above
then applies. Moreover, it is clear then that for any fixed $z_0\in\Omega$,
the reproducing kernel $k_{\rho}(z,z_0)$ of $E^{2}(\rho ds)$ has
precisely $n-1$ zeros in $\Omega$.
\end{proof}
\begin{remark}
Using more recent results in \cite{SKhav1,SKhav2}, we believe that the
above proposition can be generalized to finite Riemann surfaces.
\end{remark}

It is known, see for instance \cite{GS}, that the reproducing
kernel for the Bergman space $A^{2}(\Gg)$ has exactly the
same number of zeros as the critical points of the Green's function: in an $n$-connected domain, there are $n-1$ zeros.
If we consider a weight $\rho=\left|g\right|^{2}$
with $g\in A^{2}(\Gg)$ non-vanishing in $\overline{\Gg}$, exactly the same argument as above extends Proposition \ref{proposition:Weighted Smirnov}
to weighted Bergman spaces with such weights. Of interest to us
is what happens when the function $g$ is allowed to have zeros in
$\Gg.$ In that case, we have the following proposition.
\begin{proposition}
\label{proposition:Weighted Bergman}For any logarithmically subharmonic weight of the form $\rho=\left|g\right|^{2}$
with $g$ analytic in $\overline{\Gg}$, and any fixed $z_0\in\Gg$,
the reproducing kernel $k_{\rho}(z,z_0)$ of $A_{\rho}^{2}(\Gg)$
has precisely $n-1$ zeros in $\Gg$, where $n$ is the number
of connected components of $\Gamma$.
\end{proposition}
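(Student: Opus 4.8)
\emph{Proof proposal.} The plan is to run the same isometric reduction used for non-vanishing weights, but to keep track of the zeros that $g$ forces on the kernel. First I would note that multiplication by $g$ is an isometry $V\colon A_{\rho}^{2}(\Gg)\to A^{2}(\Gg)$, since $\int_{\Gg}|gf|^{2}\,dA=\int_{\Gg}|f|^{2}\rho\,dA$. When $g$ vanishes, $V$ is no longer onto: its range is the subspace $M_{g}\subset A^{2}(\Gg)$ of functions vanishing at each zero of $g$ to at least the corresponding order, a subspace of codimension $m$, where $m$ is the number of zeros of $g$ in $\Gg$ counted with multiplicity. Transporting the reproducing kernel through $V$ gives the identity
\[
k_{\rho}(z,z_{0})=\frac{K_{M_{g}}(z,z_{0})}{g(z)\,\overline{g(z_{0})}},
\]
where $K_{M_{g}}$ is the reproducing kernel of $M_{g}$; since $K_{M_{g}}(\cdot,z_{0})\in M_{g}$ it vanishes at the zeros of $g$, so the quotient is genuinely analytic in $\Gg$. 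Assuming $g$ has no zeros on the analytic boundary $\Gamma$, the argument principle shows that the number of zeros of $k_{\rho}(\cdot,z_{0})$ in $\Gg$ equals the number of zeros of $K_{M_{g}}(\cdot,z_{0})$ minus $m$. The proposition is therefore equivalent to the statement that $K_{M_{g}}(\cdot,z_{0})$ has exactly $(n-1)+m$ zeros in $\Gg$: the $m$ forced zeros at the zeros of $g$, plus the $n-1$ topological zeros already present in the unweighted Bergman kernel.

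To pin down this count I would argue by continuity, deforming the weight back to a case already understood. Set $\rho_{t}=|g|^{2(1-t)}$ for $t\in[0,1]$; each $\rho_{t}$ is logarithmically subharmonic, $\rho_{0}=\rho$, and $\rho_{1}\equiv 1$ is the unweighted Bergman weight, whose kernel has exactly $n-1$ zeros by \cite{GS}. Because the zeros of $g$ lie in the interior of $\Gg$, every $\rho_{t}$ is smooth and strictly positive in one fixed neighbourhood of $\Gamma$, so the kernels $k_{\rho_{t}}(\cdot,z_{0})$ form a continuous family of functions that extend analytically across $\Gamma$. By the argument principle the number of zeros of $k_{\rho_{t}}(\cdot,z_{0})$ in $\Gg$ equals $\tfrac{1}{2\pi}\Delta_{\Gamma}\arg k_{\rho_{t}}(\cdot,z_{0})$; this is an integer depending continuously on $t$, hence constant, and at $t=1$ it equals $n-1$. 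Evaluating at $t=0$ gives that $k_{\rho}(\cdot,z_{0})$ has exactly $n-1$ zeros, in agreement with the bookkeeping of the first paragraph (the same homotopy run on $K_{M_{g}}$, or a deformation sliding the zeros of $g$ outward through $\Gamma$, yields the complementary count $(n-1)+m$).

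The crux, and the step I expect to be genuinely delicate, is the boundary control that makes the argument-principle count homotopy-invariant: I must show that $k_{\rho_{t}}(\zeta,z_{0})\neq 0$ for all $\zeta\in\Gamma$ and all $t\in[0,1]$, so that no zero of $k_{\rho_{t}}(\cdot,z_{0})$ can cross $\Gamma$ and force the integer count to jump. This splits into (i) continuity of $k_{\rho_{t}}(\cdot,z_{0})$ up to and across $\Gamma$, uniformly in $t$, which should follow from the analyticity of $\Gamma$ together with the uniform positivity and smoothness of $\rho_{t}$ near $\Gamma$, and (ii) an a priori estimate confining the zeros of $k_{\rho_{t}}(\cdot,z_{0})$ to a compact subset of $\Gg$ bounded away from $\Gamma$, independently of $t$. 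The power-weight homotopy is chosen precisely so that the weight never degenerates near the boundary; by contrast, physically moving the zeros of $g$ across $\Gamma$ forces one to pass through the degenerate instant at which $\rho$ vanishes at a boundary point, and controlling the kernel there is exactly the difficulty this choice of deformation is designed to avoid.
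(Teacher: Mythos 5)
Your opening reduction is sound and matches the paper's treatment of the non-vanishing case: multiplication by $g$ transports the kernel, giving $k_{\rho}(z,z_0)=K_{M_g}(z,z_0)/(g(z)\overline{g(z_0)})$ (for $z_0$ off the zero set of $g$), and the zero count of $k_\rho(\cdot,z_0)$ is that of $K_{M_g}(\cdot,z_0)$ minus $m$. The problem is the second half. Your homotopy $\rho_t=|g|^{2(1-t)}$ hinges entirely on the step you yourself flag as ``genuinely delicate'': uniform-in-$t$ continuity of $k_{\rho_t}(\cdot,z_0)$ up to $\Gamma$ and non-vanishing of these kernels on $\Gamma$ for all $t$. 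You never establish either, and this is not a technical footnote --- it is the whole difficulty. Worse, for $0<t<1$ the weight $|g|^{2(1-t)}$ is \emph{not} of the form $|h|^2$ with $h$ single-valued analytic (a branch of $g^{1-t}$ picks up a nontrivial factor around each zero of $g$), so every intermediate weight in your family belongs precisely to the class of general logarithmically subharmonic weights for which, as the paper notes immediately after this proposition, the zero count of $k_\rho$ is an \emph{open problem}. Establishing your a priori estimate (ii) along the whole path would essentially settle that open question for this family; as written, the proof is a reduction of the stated proposition to something at least as hard.

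The paper's proof deforms the \emph{domain} rather than the weight, and this is exactly what sidesteps your obstruction. It excises small disks $\Delta_{j,\epsilon}$ around the zeros of $g$, so that on the resulting $(n+m)$-connected domain $\Gg_\epsilon$ the weight is non-vanishing and the weighted kernel is $k_\epsilon(z,z_0)/(g(z)\overline{g(z_0)})$, with exactly $n+m-1$ zeros by the unweighted count of Gustafsson--Sebbar. Letting $\epsilon\to 0$, analyticity of the limit $k_\rho(\cdot,z_0)$ forces $m$ of those zeros into the zeros of $g$ to cancel the poles, leaving $n-1$. Every intermediate object there stays inside the class where the count is already known, whereas your path exits that class at every $t\in(0,1)$. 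If you want to salvage your argument, you would need to replace the power-weight homotopy with one that stays within weights of the form $|h|^2$, $h$ analytic and single-valued --- for instance by deforming the zeros of $g$ within a fixed compact subset of $\Gg$ rather than toward $\Gamma$ --- and even then the uniform boundary non-vanishing of the kernels along the path must be proved, not presumed.
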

\begin{proof}
As mentioned, when $g$ is nonvanishing in $\overline{\Gg}$, we can
extend Proposition \ref{proposition:Weighted Smirnov} \emph{mutatis mutandis.
}Suppose now that $g$ does vanish in $\Gg$ (but not on $\Gamma$), and let $z_{1},\ldots, z_{m}$
be the zeros of $g$ in $\Gg$. For $\epsilon>0$, let $\Delta_{j,\epsilon}$
be the disk of radius $\epsilon$ centered at $z_{j}$, and let
\[\Gg_{\epsilon}=\Gg\setminus\cup_{j=1}^{m}\Delta_{j,\epsilon}\]
be the domain obtained by excising these disks. This domain then is
$n+m$-connected and so its unweighted Bergman kernel $k_{\epsilon}(z,z_0)$
has $n+m-1$ zeros in $\Gg_{\epsilon}$. Now 
the weight $\rho=\left|g\right|^{2}$ is nonvanishing in $\Gg_{\epsilon}$,
and so the kernel of the weighted Bergman space $A_{\rho}^{2}(\Gg_{\epsilon})$
is given by 
\[
k_{\rho, \epsilon}(z,z_0)=\frac{k_{\epsilon}(z,z_0)}{g(z)\overline{g(z_0)}},
\]
and hence has $m+n-1$ zeros in $\Gg_{\epsilon}$. 

Letting $\epsilon\rightarrow0$, we have that 
\begin{equation}
k_{\rho, \epsilon}(z,z_0)\rightarrow k_{\rho}(z,z_0), \quad z\notin\bigcup_{j=1}^{m}\Delta_{j,\epsilon}.
\label{shrinkingholes}
\end{equation}

Since $\Gg \ni z\mapsto k_{\rho}(z,z_0)$ is an analytic function, it must
be the case that $m$ zeros of $k_{\epsilon}(z, z_0)$ tend to the zeros of $g$ as $\epsilon \to 0$ in order to cancel
out the poles at $z_1,\ldots, z_m$. But then 
$k_{\rho}(z,z_0)$ has $n+m-1-m=n-1$ zeros, as claimed, and in light of \eqref{shrinkingholes} and Hurwitz's theorem, these zeros coincide with those of the unweighted reproducing kernel $k(z,z_0)$. 
\end{proof}
The argument here is in the spirit of \cite{KS}. The question
of whether the number of zeros for $k_{\rho}$ is $n-1$ for {\it any}
logarithmically subharmonic weight $\rho$ (see \cite{Hbook,DSbook} for a definition)  is still open. 

We return to solutions of the extremal problem \eqref{eq:extprob1} and draw some conclusions from the facts about reproducing kernels established above. First, let $B_{z_1}$ denote a ``Blaschke factor" for $\Gg$ with a single zero at $z_1$, that is, $B_{z_1}$ maps $\Gg$ to a disk with $n-1$ concentric circular slits removed and has $B_{z_1}(z_1)=0$, see  \cite{CW,Khav1}. Then one checks that the extremal function for \eqref{eq:extprob1} can be realized as
\[G(z)=B_{z_1}(z)k^{|B_{z_1}|^2}(z,z_0),\]
where $k^{|B_{z_1}|^2}$ denotes the normalized reproducing kernel in a space with weight given by $|B_{z_1}|^2$. In the case of the Bergman space, we weight area measure by $|B_{z_1}|^2$, while in the case of the Hardy space we consider the norm inherited from $L^2( \Gamma,|B_{z_1}|^2d\omega_{z_0})$, and in both cases, $\|k^{|B_{z_1}|^2}\|_{\mathscr{X}}=1$. Now, by Propositions \ref{proposition:Weighted Smirnov} and \ref{proposition:Weighted Bergman}, the weighted $k^{|B_{z_1}|^2}$ and the unweighted kernel $k(\cdot, z_0)$ have the same extraneous zeros.
This property is inherited by the extremal function $G$. Moreover, the fact that extraneous zeros of extremals coincide with those of the reproducing kernel remains unaffected if we replace the problem \eqref{eq:extprob1} by the more general problem
\[\sup\left\{ \left|f(z_{0})\right|\colon f(z_{j})=0 \quad \textrm{for} \quad z_j \in \mathcal{Z} \quad \textrm{and} \quad \left\Vert f\right\Vert _{\mathscr{X}}=1\right\},\]
for a Hardy or Bergman-zero set $\mathcal{Z}$.

\section{Inner functions and invariant subspaces}\label{InvariantSubs}
We have seen that translating the usual orthogonality conditions and extremal problems that provide equivalent definitions for inner functions in simply connected domains do not immediately 
lead to inner functions in the sense of Definition \ref{multinnerconstants}. We now review some of the good properties of 
functions that are inner in the sense of Definition \ref{multinnerconstants}, and we also make some new observations.

We follow Royden \cite{Roy} and say that a closed subspace $S\subset H^2(\Gg)$ is fully invariant if $rS\subset S$
whenever $r\in R(\Gg)$. Zero-based subspaces 
\[\mathscr{M}_{\mathcal{Z}}=\{f \in\mathscr{X}\colon f(z_j)=0, \,\, z_j \in \mathcal{Z}\}\] 
are prime examples of such fully invariant subspaces.

We now state the analog of Beurling's theorem for multiply connected domains. This theorem traces its origins back to the 1960s \cite{Voi,VZ,Has}, and has inspired numerous subsequent papers, see for instance \cite{Y,AleRic,ChenEtal}. Further operator-theoretic developments can be found in \cite{AHRbook}.
\begin{theorem}\label{multiconnbeurling}
Let $\mathscr{M}\neq \{0\}$ be a closed fully invariant subspace of $H^2(\Gg)$. Then there exists an inner function $G$ such that $\mathscr{M}=GH^2(\Gg)$.
\end{theorem}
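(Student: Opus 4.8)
The plan is to produce $G$ as a greatest common inner divisor of $\mathscr{M}$ and then to verify the two inclusions $\mathscr{M}\subseteq GH^2(\Gg)$ and $GH^2(\Gg)\subseteq\mathscr{M}$. Note first that the naive extremal function of Section \ref{ExtremalProbs} will \emph{not} serve as the generator, since we saw there that solutions of the point-evaluation extremal problem need not have constant modulus on the $\gamma_j$. Instead, the construction rests on inner--outer factorization in $H^2(\Gg)$: every nonzero $f\in H^2(\Gg)$ (indeed every $f\in N_{+}(\Gg)$) factors as $f=I_f O_f$ with $I_f$ inner in the sense of Definition \ref{multinnerconstants} and $O_f$ outer, and the inner factor splits further, via Examples \ref{blaschkeex} and \ref{singularex}, as $I_f=B_{\mathcal{Z}_f}S_{\mu_f}$ into a generalized Blaschke product carrying the zeros $\mathcal{Z}_f$ of $f$ and a singular inner function carrying a non-positive measure $\mu_f\perp\omega_{z_0}$. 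I would define $G$ by taking, on the Blaschke side, the common zeros $\mathcal{Z}=\bigcap_f\mathcal{Z}_f$ (with minimal multiplicities), and on the singular side the lattice supremum $\mu=\bigvee_f\mu_f$, which is again a finite non-positive measure with $\mu\perp\omega_{z_0}$; the generalized Blaschke condition $\sum_j g(z_j,z_0)<\infty$ for $\mathcal{Z}$ and the finiteness of $\mu$ are inherited from any single member of $\mathscr{M}$. One then forms $G=B_{\mathcal{Z}}S_{\mu}$ using \eqref{blaschkeformula} and \eqref{singularformula}, choosing the period constants $\lambda_1,\ldots,\lambda_n$ so that $G$ is single-valued in $\Gg$; it is precisely this period cancellation that fixes the boundary moduli $c_j=e^{\lambda_j}$ and makes $G$ inner in the sense of Definition \ref{multinnerconstants} with the (generally nonconstant) constants $c_j$.

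Granting such a $G$, the inclusion $\mathscr{M}\subseteq GH^2(\Gg)$ is the routine half. For $f\in\mathscr{M}$ the divisor $G$ divides $I_f$, so $h:=f/G=(I_f/G)O_f$ is analytic and lies in $N_{+}(\Gg)$; on each $\gamma_j$ its boundary modulus is $|f^*|/c_j$, which is square integrable against $\omega_{z_0}$ because $f\in H^2(\Gg)$. Since a Smirnov-class function with $L^2(\omega_{z_0})$ boundary values belongs to $H^2(\Gg)$, we obtain $h\in H^2(\Gg)$ and hence $f=Gh\in GH^2(\Gg)$. For the reverse inclusion, multiplication by $G$ is bounded above and below in norm, because $\|Gh\|_{H^2(\Gg)}^2=\sum_{j}c_j^2\int_{\gamma_j}|h^*|^2\,d\omega_{z_0}$ is comparable to $\|h\|_{H^2(\Gg)}^2$ (the interior zeros of the Blaschke factor do not affect this boundary integral); consequently $GH^2(\Gg)$ is a closed subspace and $G\cdot R(\Gg)$ is dense in it, using the density of $R(\Gg)$ in $H^2(\Gg)$. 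Thus, once we know $G\in\mathscr{M}$, full invariance gives $G\cdot R(\Gg)\subseteq\mathscr{M}$ and the closedness of $\mathscr{M}$ yields $GH^2(\Gg)=\overline{G\cdot R(\Gg)}\subseteq\mathscr{M}$.

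The hard part will be the membership $G\in\mathscr{M}$, that is, that the greatest common inner divisor is itself a limit of elements of the subspace and not merely a common divisor of them. In the disk this is the statement that the wandering subspace $\mathscr{M}\ominus z\mathscr{M}$ is one dimensional and spanned by the inner generator, and it is exactly here that the multiply connected geometry bites: there is no single shift, the analogous defect space $\mathscr{M}\ominus\overline{\mathfrak{m}_{z_0}\mathscr{M}}$ with $\mathfrak{m}_{z_0}=\{r\in R(\Gg):r(z_0)=0\}$ must be analyzed, and the candidate $G$ must simultaneously be shown single-valued, which couples the membership problem to the period bookkeeping above. I would handle this either intrinsically, by approximating $G$ in norm by inner factors of suitable elements of $\mathscr{M}$ and invoking a normal-families argument to keep the limit inside the closed subspace, or by the classical route of Voichick, Zalcman, and Hasumi \cite{Voi,VZ,Has}: lift $\mathscr{M}$ through the universal covering map to a shift-invariant subspace of $H^2(\mathbb{D})$, apply the disk Beurling theorem to obtain a character-automorphic inner generator, and descend, the nontrivial character being absorbed by a harmonic-measure unit $\exp\!\big(\sum_k\lambda_k(\omega_k+i\widetilde{\omega}_k)\big)$ whose moduli $e^{\lambda_k}$ reproduce the constants $c_j$. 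Either way, the delicate point is reconciling single-valuedness in $\Gg$ with membership in $\mathscr{M}$, and this is what distinguishes the result from its simply connected prototype.
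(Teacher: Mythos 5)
Your overall architecture --- build $G$ as a greatest common inner divisor, check $\mathscr{M}\subseteq GH^2(\Gg)$ by factorization, and reduce the reverse inclusion to the membership $G\in\mathscr{M}$ --- is a legitimate strategy and genuinely different from the paper's. The paper follows Hoffman's route: it takes $G_0=P_{\mathscr{M}}[1]$, so that membership in $\mathscr{M}$ is automatic, derives from the variational identity $\int_\Gamma(|G|^2-1)r\,d\omega_{z_0}=0$ together with Proposition \ref{Schottkyprop} that $|G|^2=1+\sum_j\lambda_j s_j$ on $\Gamma$ (Schottky functions), factors $G=Iu$ with $u$ outer, zero-free and analytic across $\Gamma$, hence cyclic, and then does the real work in showing that no nonzero element of $\mathscr{M}$ is orthogonal to $GH^2(\Gg)$. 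The two approaches thus face dual difficulties: the paper gets $G\in\mathscr{M}$ for free and must prove generation; you get the common-divisor property essentially for free and must prove membership.

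The gap is that the membership step is never actually carried out. You correctly identify $G\in\mathscr{M}$ as the crux, but then offer two unexecuted alternatives. The ``intrinsic'' one, as stated, does not work: a normal-families argument produces locally uniform convergence on compacta, which does not imply $H^2(\Gg)$-norm convergence, so a locally uniform limit of inner factors of elements of $\mathscr{M}$ need not lie in the norm-closed subspace $\mathscr{M}$; one needs a genuinely different mechanism here (in the disk this is the wandering-subspace computation, and in the paper it is the Schottky-function analysis of $\overline{G}f$). The covering-space route of Voichick--Zalcman--Hasumi can be made to work, but the descent --- showing that the character of the lifted Beurling generator is realized by a harmonic-measure unit $\exp\big(\sum_k\lambda_k(\omega_k+i\widetilde{\omega}_k)\big)$ so that the quotient is single-valued on $\Gg$ --- is precisely the substance of the theorem in this setting and cannot simply be asserted. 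There are also smaller unaddressed points: the inner factor $I_f$ is determined only up to invertible inner factors, so the ``GCD'' data $\mathcal{Z}=\bigcap_f\mathcal{Z}_f$ and $\mu=\bigvee_f\mu_f$ are not canonically defined without fixing normalizations consistently across all of $\mathscr{M}$, and the supremum of an uncountable family of singular measures requires a reduction to a countable subfamily. As written, the proposal is a sound plan whose decisive step is missing.
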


While proofs of Theorem \ref{multiconnbeurling} can be found in several cited sources, the argument proceeds along slightly different lines. Following the streamlined proof in Hoffman \cite{Hoffbook}, we make systematic use of factorization of Hardy functions, making the proof quite similar to that for the disk case. As in \cite{Roy}, the proof is readily extended to cover invariant subspaces in $H^p$,  $p\geq 1$. (In fact, extensions to the case $p<1$ are also possible after a suitable modification of the notion of invariant subspace, but we shall not pursue this here.)
\begin{proof}
Define the function
\[G_0=P_ {\mathscr{M}}[1],\]
where $P_{\mathscr{M}}\colon H^2(\Gg)\to \mathscr{M}$ denotes the orthogonal projection onto the subspace $\mathscr{M}$. Pick $z_0\in \Gg$ that is not a joint zero of all elements on $\mathscr{M}$. Then, for all $r \in A(\Gg)$ having $r(z_0)=0$,
\[0=\int _{\Gamma}(1-\overline{G}_0)G_0rd\omega_{z_0}=-\int_{\Gamma}|G_0|^2rd\omega_{z_0}.\]
Set $G=G_0/\|G_0\|_{H^2(\Gg)}$. Then $G\in \mathscr{M}$, and moreover
\[\int_{\Gamma}(|G|^2-1)rd\omega_{z_0}=0 \quad \textrm{for}\quad r\in A(\Gg).\]

It remains to prove that $G$ generates $\mathscr{M}$. By Proposition \ref{Schottkyprop},
\begin{equation}
|G(\zeta)|^2=1+\sum_{j=1}^{n-1}\lambda_js_j(\zeta),
\label{Gsum}
\end{equation}
where $s_j$ are the Schottky functions for $\Gg$. By \cite[Section 4]{Khav1} (see also \cite{CW}), we have
\[G=Iu,\]
where $I$ is inner, and $u$ is non-vanishing in $\Gg$, and analytic in $\overline{\Gg}$ in light of \eqref{Gsum}. Hence $u$ is outer, with at most finitely many zeros on $\Gamma$, and hence $u$ is cyclic. (This follows from an argument that shows that any rational function with no zeros in $\Gg$ and with at most finitely many zeros on $\Gamma$ is cyclic; we sketch the argument. First, we establish cyclicity for functions of the form $f=(z-\zeta_0)\cdot v$,  $\zeta_0\in \gamma_1$ fixed and $v$ non-vanishing on $\overline{\Gg}$, in a smaller Hardy space on $\mathrm{int}(\gamma_1)$. In that case, as in the unit disk, we can find a sequence of polynomials $\{p_n\}$ such that $p_nf-1\to 0$ in $L^2(\gamma_1)$, and by subharmonicity, the same holds for other boundary components, and this then shows that $(z-\zeta_0)v$ is cyclic. Finally, we use that a finite product of cyclic multipliers is cyclic to extend the argument to the case of finitely many zeros on $\Gamma$.)

Now suppose $f \in \mathscr{M}$ and $f\perp GH^2(\Gg)$ and recall that $G=P_{\mathscr{M}}[1]\|G_0\|$. Then
\[\int_{\Gamma}\overline{f}Grd\omega_{z_0}=0, \quad r\in A(\Gg),\]
and for $r\in A(\Gg)$ with $r(z_0)=0$, we also have 
\[\int_{\Gamma}\overline{G}frd\omega=0.\]
Together, this implies that $\overline{f}G\perp (A(\Gg)\oplus \overline A_0(\Gg))$. Then, since the Schottky functions $s_j$ are real-valued on $\Gamma$, we deduce that
\begin{equation}
\overline{G}f \in \mathscr{S}=\mathrm{span}_{\mathbb{C}}\{s_1,\ldots, s_{n-1}\} 
\label{inschottky}
\end{equation}
and also \[G\overline{f}\in \mathscr{S}.\]
Since $|G|$ is real-analytic on $\Gamma$, and elements of $\mathscr{S}$ are real-analytic on the boundary, the function $|f|$ is also real-analytic. 

Consider the factorizations
\[G=I_GF_GQ_G \quad \textrm{and}\quad f=I_fF_fQ_f,\]
where the functions $I_G$ and $I_f$ are multi-valued functions, while $F_G$ and $F_f$ are single-valued outer factors, analytic across $\Gamma$ in light of \eqref{Gsum} and \eqref{inschottky}, and $Q_G$ and $Q_f$ are ``period removers" of the form $Q=\exp[\sum_{j=1}^{n-1}\lambda_j(\omega_j+i\widetilde{\omega}_j)]$, for suitably chosen $\lambda_j$. From \eqref{inschottky}, we have
\[\overline{G}f=(I_f/I_G)\,Q_f\overline{Q}_{G}\,\overline{F}_GF_f\]
and since $Q_{f}, Q_{G}$ and $\overline{F}_G, F_f$ are real-analytic on $\Gamma$, and $F_G$ and $F_f$ are single-valued, $(I_f/I_G)Q_f\overline{Q}_G$ is single-valued and real-analytic near $\Gamma$. This in turn implies that $\log^+|I_f|-\log^+|I_G|$ is real-analytic on $\Gamma$ as well, and, by uniqueness for Smirnov classes \cite{Khav1,SKhavTum3}, the singular measures appearing in $I_f$ and $I_G$ coincide.

We now conclude from this that $f$ and $G$ have the same zeros in a neighborhood of $\Gamma$. This means we can write
\[Q_fI_f=I_GQ_GB_0,\]
where $B_0$ is an inner factor with finitely many zeros in $\Gg$, analytic up to the boundary. Hence
\begin{equation}
f=I_GQ_GQ_fF_fB_0=(G/F_G)Q_fF_fB_0.
\label{ffactorization}
\end{equation}
Note that the factor $B_0Q_f$ is necessarily single-valued since the other factors are single-valued. Interchanging the roles of $f$ and $G$ shows that $B_0$ is in fact trivial, and hence $f\in [G]$ by \eqref{ffactorization}. Since also $f\perp [G]$, we have $f=0$, and this finishes the proof.
\end{proof}

Let us return to solutions to extremal problems and examine how they relate to the inner-function generators of invariant subspaces in Theorem \ref{multiconnbeurling}. If $\mathscr{M} \subset H^2(\Gg)$ is an invariant subspace, we can pose the extremal problem
\[\sup\{|f(z_0)|\colon f\in \mathscr{M}, \|f\|_{H^2(\Gg)}=1\};\]
here, we are assuming that $z_0\notin \mathcal{Z}_ {\mathscr{M}}=\{z\in \Gg\colon f(z)=0 \quad \forall f\in \mathscr{M}\}$. The extremal function solving this problem can now be written
\[G_e(z)=G(z)k^{\mathscr{M}}(z,z_0),\]
with $k^{\mathscr{M}}$ denoting the reproducing kernel  for $H^2(\Gg)$ with weight $|G|^2$, where $G$ generates $\mathscr{M}$. Note that $G_e\in \mathscr{M}$ by Theorem \ref{multiconnbeurling}. As was explained in the previous section, the zeros of $k^{\mathscr{M}}$ coincide with the zeros of $k$, the Szeg\H{o} kernel, which is reproducing for $H^2(\Gg)$. Let
\[Z=\{w_1(z_0), \ldots, w_{n-1}(z_0)\} \subset \Gg\]
denote these zeros, and let $G_{Z} \in H^{\infty}(\Gg)$ be a fixed inner function with these zeros. In other words, $|G_{Z}(\zeta)|_ {\big|_{\gamma_j}}=c_j$, we have $G_{Z}(w_k)=0$ and $G_Z$ has no other zeros in $\Gg$, and the singular inner factor of $G_Z$ is identically equal to $1$. After dividing $G_e$ by $G_Z$, we obtain another generator for $\mathscr{M}$, and thus arrive at an alternative proof of Theorem \ref{multiconnbeurling}.  

We turn to the problem of dividing out zeros. Note that, unlike in the disk, we cannot hope for isometric divisors in the case of the Hardy space $H^2(\Gg)$. Indeed, if $G$ were an isometric divisor for $H^2(\Gg)$, then we would have  $|G|=1$ on $\Gamma$. If $G\in C(\overline{\Gg})$ is not constant, then $G'(\zeta)\neq 0$ on $\Gamma$, and hence $\partial (\arg G)/ds>0$. Hence $\arg G$ increases by an integer multiple of $2\pi$ along each $\gamma_j$, and the argument principle then implies that $G$ has at least $n$ zeros. But this makes it impossible for $G$ to be a divisor for invariant subspaces associated with fewer than $n$ zeros in $\Gg$.

We recall that a contractive divisor associated with a zero set $\mathcal{Z}$ for $A^p(\mathbb{D})$, a Bergman space in the disk, is a function $G$ that vanishes on $\mathcal{Z}$, and satisfies $\|f/G\|_{A^p}\leq \|f\|_{A^p}$ for any $f\in \mathscr{M}_{\mathcal{Z}}$. Guided by this, we now make the following definition for a function space $\mathscr{X}$ in a multiply connected domain.
\begin{definition}
Let $\mathcal{Z}$ be a $\mathscr{X}$-zero set. We say that $G\in \mathscr{X}$ is a quasi-contractive divisor if $G\in \mathscr{M}_{\mathcal{Z}}$ and there exists a constant $C_{\mathscr{X}, \Gg}$ such that
\[\|f/G\|_{\mathscr{X}}\leq C_{\mathscr{X}, \Gg}\|f\|_{\mathscr{X}}, \quad f\in \mathscr{M}_{\mathcal{Z}}.\]
\end{definition}

We now prove that $H^2(\Gg)$ supports quasi-contractive divisors. 
\begin{theorem}\label{QCdiv}
Let $\mathcal{Z}=\{z_j\}_{j=1}^{\infty}$ be an $H^2(\Gg)$-zero set and let $\mu$ be a non-positive measure with $\mu \perp \omega_{z_0}$.  

There exist positive numbers $\{\lambda_j\}_{j=1}^{n}$
such that the inner function
\[G=B_{\mathcal{Z}}\cdot S_\mu,\]
with $B_{\mathcal{Z}}$ and $S_{\mu}$ as in \eqref{blaschkeformula} and \eqref{singularformula},
satisfies 
\[|G(\zeta)|_{\Big|_{\gamma_j}}=e^{\lambda_j}  \quad \textrm{and} \quad 1\leq \|G\|_{H^2}\leq e^{\lambda_j}.\]

Moreover, if $G_0=G/\|G\|_{H^2(\Gg)}$ and $c_{\Gg}=\max_{j=1,\ldots, n} \lambda_j$, then $0<c_{\Gg}<\infty$, and setting $C_{\Gg}=e^{c_{\Gg}}$, we have
\[C_{\Gg}^{-1}\|f\|_{H^2(\Gg)}\leq \|G_0f\|_{H^2(\Gg)}\leq C_{\Gg}\|f\|_{H^2(\Gg)}, \quad f\in H^2(\Gg).\] 
In particular, $\|f/G_0\|_{H^2}\leq C_{\Gg}\|f\|_{H^2}$ for any $f\in[G]$.
\end{theorem}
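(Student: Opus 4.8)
The plan is to exploit the single structural fact that $G=B_{\mathcal{Z}}\cdot S_\mu$ has \emph{constant modulus on each boundary curve}, and then to read off every assertion from the elementary identity $\|h\|_{H^2(\Gg)}^2=\int_\Gamma|h^*|^2\,d\omega_{z_0}$ together with the normalization $\omega_{z_0}(\Gamma)=1$.

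First I would record the modulus of $G$. By Examples \ref{blaschkeex} and \ref{singularex}, both $B_{\mathcal{Z}}$ and $S_\mu$ are single-valued and each has constant modulus on every $\gamma_j$; hence $G$ is single-valued with $|G^*(\zeta)|_{\big|_{\gamma_j}}=e^{\lambda_j}$ for some real constants $\lambda_j$. Since $\mu\leq 0$ the factor $S_\mu$ is bounded, and $B_{\mathcal{Z}}$ is a bounded generalized Blaschke product, so each $\lambda_j$ is finite. The constants $\lambda_j$ are determined only up to a common additive constant: multiplying $G$ by a positive scalar $t$ replaces $\lambda_j$ by $\lambda_j+\log t$ and keeps $G$ single-valued, while the normalized function $G_0=G/\|G\|_{H^2(\Gg)}$ is unchanged. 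I would therefore fix $t$ large enough that every $\lambda_j>0$; this yields $0<c_\Gg=\max_j\lambda_j<\infty$ and $C_\Gg=e^{c_\Gg}>1$. This normalization is the only choice to be made, and it is harmless precisely because it leaves $G_0$ fixed.

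Next I would compute the norm. Splitting the boundary integral over the components and using that $|G^*|$ is constant on each,
\[\|G\|_{H^2(\Gg)}^2=\sum_{j=1}^n\int_{\gamma_j}|G^*|^2\,d\omega_{z_0}=\sum_{j=1}^n e^{2\lambda_j}\,\omega_{z_0}(\gamma_j).\]
Because harmonic measure is a probability measure, $\sum_j\omega_{z_0}(\gamma_j)=1$, and since $1<e^{2\lambda_j}\leq e^{2c_\Gg}$ for every $j$, this convex combination lies in $[1,e^{2c_\Gg}]$. Hence $1\leq\|G\|_{H^2(\Gg)}\leq e^{c_\Gg}=C_\Gg$, which is the asserted estimate $1\leq\|G\|_{H^2}\leq\max_j e^{\lambda_j}$. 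The two-sided multiplier bound then follows by bounding the boundary multiplier pointwise: on $\gamma_j$ we have $|G_0^*|^2=e^{2\lambda_j}/\|G\|^2$, so using $\|G\|^2\geq 1$ in one direction and $\|G\|^2\leq C_\Gg^2$ together with $e^{2\lambda_j}\geq 1$ in the other, I obtain $C_\Gg^{-2}\leq|G_0^*(\zeta)|^2\leq C_\Gg^2$ for $\omega_{z_0}$-a.e. $\zeta\in\Gamma$. Multiplying by $|f^*|^2$ and integrating against $\omega_{z_0}$ gives $C_\Gg^{-2}\|f\|^2\leq\|G_0f\|^2\leq C_\Gg^2\|f\|^2$ for all $f\in H^2(\Gg)$, which is the main display. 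For the ``in particular'' statement, the lower bound makes multiplication by $G_0$ bounded below, so $G_0H^2(\Gg)$ is closed and equals $[G]$; writing $f=G_0h$ with $h\in H^2(\Gg)$ gives $f/G_0=h$ and $\|f/G_0\|=\|h\|\leq C_\Gg\|G_0h\|=C_\Gg\|f\|$.

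I expect the only genuine subtlety to lie in the first paragraph: verifying that $|G|$ really is locally constant on $\Gamma$ (which rests on the constructions of $B_{\mathcal{Z}}$ and $S_\mu$ and on $G$ being single-valued) and arranging all $\lambda_j>0$ without disturbing $G_0$. Once constant boundary moduli are in hand, the remaining estimates are immediate consequences of $\omega_{z_0}$ being a probability measure distributed over the $n$ components.
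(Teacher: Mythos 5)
Your second and third paragraphs (the norm computation via $\|G\|^2=\sum_j e^{2\lambda_j}\omega_{z_0}(\gamma_j)$, the pointwise bounds on $|G_0^*|$, and the closedness of $G_0H^2(\Gg)$ for the ``in particular'' clause) are fine and agree in substance with the paper's argument. The gap is in your first paragraph, and it is exactly where the paper locates the main point of the proof. The content of the theorem --- in view of the preceding definition of a quasi-contractive divisor and the sentence ``We now prove that $H^2(\Gg)$ supports quasi-contractive divisors'' --- is that $c_{\Gg}$ and $C_{\Gg}=e^{c_{\Gg}}$ depend \emph{only on the domain} $\Gg$, not on the zero set $\mathcal{Z}$ or the singular measure $\mu$. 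Your normalization uses only the freedom of adding a common constant to all the $\lambda_j$ (multiplying $G$ by a positive scalar $t$). That makes every $\lambda_j$ positive, but it gives no control whatsoever on the spread $\max_j\lambda_j-\min_j\lambda_j$, which a priori depends on $\mathcal{Z}$ and $\mu$ through the periods of the Green-potential and Green--Stieltjes parts of the exponent. With your choice, $c_{\Gg}=\max_j\lambda_j$ is finite but is a function of $(\mathcal{Z},\mu)$, so you have not produced a single constant $C_{\Gg}$ that works for all zero sets, and the quasi-contractivity claim in the sense the paper needs it is not established.

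The paper's proof supplies precisely the missing ingredient: because the period-cancellation conditions only constrain the periods of $\sum_{j=1}^{n}\lambda_j(\omega_j+i\widetilde{\omega}_j)$ modulo $2\pi$ in each coordinate, the admissible vectors $(\lambda_1,\dots,\lambda_n)$ form a coset of a set containing a full lattice (plus the diagonal direction you used), and the key observation is that there is a constant $c_{\Gg}$, depending only on $\Gg$, such that the periods realized by $\lambda\in(0,c_{\Gg}]^n$ already cover all of $[0,2\pi]^n$. Hence for \emph{every} $\mathcal{Z}$ and $\mu$ one can choose the period remover with $0<\lambda_j\le c_{\Gg}$, and only then do your subsequent estimates yield a uniform $C_{\Gg}$. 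Your claim that the $\lambda_j$ are determined ``only up to a common additive constant'' is therefore also not quite right --- the extra lattice freedom is exactly what has to be exploited --- and the ``only genuine subtlety'' you flag (making the $\lambda_j$ positive) is not the one that matters.
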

As is the case for Theorem \ref{multiconnbeurling}, Theorem \ref{QCdiv} extends to $H^p$ for $p\geq 1$ (and even to $p<1$) via factorization techniques from \cite{Khav1,Khav2}. 
\begin{proof}
 Recall that any inner function in $H^2(\Gg)$ can be written as a product of a Blaschke product and a singular inner function (and this is uniquely determined up to invertible factors) The existence of an inner function $G$ follows from the results of \cite[Lemma 2]{Khav2},  cf. Examples \ref{blaschkeex} and \ref{singularex}.
The main observation is that there exists a constant $c_{\Gg}$ with the property that, for $0< \lambda_j\leq c_{\Gg}$, $j=1, \ldots, n$, the periods of
\[\sum_{j=1}^{n}\lambda_j(\omega_j+i\widetilde{\lambda}_j)\]
around the boundary curves $\gamma_k$ cover the set 
\[\mathcal{C}=\{x \in  \mathbb{R}^{n}\colon 0\leq x_j\leq 2\pi, \, j=1,\ldots, n\}.\]

This means that, given the zero set $\mathcal{Z}$ and the singular measure $\mu$, 
we can construct a period remover 
\[Q(z)=\exp\left[\sum_{j=1}^{n}\lambda_j\left(\omega_j+i\widetilde{\omega}_j\right)\right]\]
with $\lambda_1, \ldots, \lambda_{n}$, $0<\lambda_j<c_{\Gg}$, selected in such a way that the function
\[G(z)=\exp\left(-\sum_{j=1}^{\infty}p(z_j,z_0)\right)\exp \left[\frac{1}{2\pi}\int_{\Gamma}\left(\frac{\partial g}{\partial n_{\zeta}}(z,\zeta)+i\frac{\partial \widetilde{g}}{\partial n_{\zeta}}(z,\zeta)\right)d\mu(\zeta)\right]Q(z)\]
is single-valued and inner. Now, invoking standard results about Green-Stieltjes integrals, see \cite{Khav1}, we observe that
\[1\leq |G(\zeta)|\leq \max_j e^{\lambda_j} \leq e^{c_{\Gg}}, \quad \textrm{for} \quad \omega-\textrm{a.e.}\, \quad \zeta \in \Gamma.\]
Setting $C_{\Gg}=e^{c_{\Gg}}$, we immediately obtain the first part of the statement.

To get the second part, note that $C_{\Gg}\geq \|G\|_{H^2(\Gg)}\geq 1$, meaning that $1/C_{\Gg}\leq|G_0(\zeta)|\leq C_{\Gg}$ on $\Gamma$. The claimed norm inequality follows.
\end{proof}

Since the Bergman space $A^2(\mathbb{D})$ has contractive divisors, it is natural to ask whether there are quasi-contractive divisors for $A^2(\Gg)$. We have not been able to settle this question; instead, we will report on some hitherto unsuccessful attempts to make progress on this problem, and why there are good reasons to believe that the problem might be challenging.

A natural starting point would be to consider the extremal problem \eqref{eq:extprob1}, and to view solutions as candidates for contractive or quasi-contractive divisors. However, as we have seen, the extremals $G$ have extraneous zeros, so that quotients $f/G$, where $f\in \mathscr{M}_{\mathcal{Z}}$ for some Bergman zero set $\mathcal{Z}$, become meromorphic in general. This obstruction was observed some time ago by Hedenmalm and Zhu \cite{HZ} in the context of weighted spaces in the disk as well as in multiply connected domains; see also \cite{Weir}.

Nevertheless, it is conceivable that a modified construction could work. Let us restrict to the case of a single zero for simplicity. First of all, recall from Section \ref{ExtremalProbs} that the problem
\[\sup\{\mathrm{Re} f(z_0)\colon f(z_1)=0, \, \|f\|_{A^2(\Gg}\leq 1\}, \quad z_1\in \Gg\setminus \{z_0\}\]
gives rise to extremals of the form
\[G(z)=B_{z_1}(z)k_{z_0}^{|B_{z_1}|^2}(z),\]
where $B_{z_1}$ is a Blaschke factor with a single zero at $z_1$ and constant modulus on the boundary curves $\gamma_j$, and $k_{z_0}^{|B_{z_1}|^2}$ is the normalized reproducing kernel at $z_0$ for the weighted Bergman space
$A^2(|B_{z_1}|^2dA, \Gg)$, with norm furnished by
\[\|f\|^2_{A^2(|B_{z_1}|^2)}=\int_{\Gg}|f(z)|^2\,|B_{z_1}|^2dA(z).\]
Now, by Proposition \ref{proposition:Weighted Bergman}, the kernel $k_{z_0}^{|B_{z_1}|^2}$ has $n-1$ zeros, and these zeros coincide with those of the unweighted Bergman kernel $K_{z_0}$, which only depends on the underlying domain $\Gg$. Let $B_{\Gg}$ be a Blaschke product formed from the zeros of $K_{z_0}$, and set
\begin{equation}
\tilde{G}=B_{z_1}\frac{k^{|B_{z_1}|^2}_{z_0}}{B_{\Gg}}.
\label{onezerocand}
\end{equation}
The function $\tilde{G}$ is analytic in $\Gg$, with the additional property that $\tilde{G}(z_1)=0$ and $\tilde{G}(z)\neq 0$ for $z\in \Gg\setminus \{z_1\}$, and thus $\tilde{G}$ seems like a candidate to be a divisor. The same modification can be performed for extremal functions associated with finite zero sets $\{z_1, \ldots, z_n\}$.
It is clear that there are some issues here, however: we would then like to send $n\to \infty$ to extend our construction to infinite $A^2(\Gg)$ zero sets. Unfortunately, infinite Blaschke products associated with Bergman zero sets do not converge in general unless the zero set itself happens to be a Blaschke sequence, and it is then not clear that modifications of the form above converge. Presumably, one would need tight control over the interplay between weighted kernels and Blaschke products in order to obtain convergent candidate divisors. In principle, one could also try 
\[\tilde{G}^{\dagger}=B_{z_1}\frac{k^{|B_{z_1}|^2}_{z_0}}{k_{z_0}},\]
where $k_{z_0}$ is the unweighted Bergman kernel at $z_0\in \Gg$; the ratio of $B_{\Gg}$ and $k_{z_0}$ is bounded above and below for finite sets zeros, but one could imagine that one or the other of $\tilde{G}$ and $\tilde{G}^{\dagger}$ might be easier to work with in practice.

Returning to the simplest case of a single zero and the function $\tilde{G}$ in \eqref{onezerocand}, it is not at all clear how to establish quasicontractivity. In the unit disk, the standard route to establishing that the solution to the corresponding extremal problem is a contractive divisor is via the biharmonic Green's function \cite{DKSSPac,DSbook}. Ignoring the Blaschke factor $B_{z_1}$ in the denominator of $\tilde{G}$ for a moment, recalling the representation of the extremal $G$ from \eqref{bergmandecomp}, using the fact that $H+\sum_1^n\lambda_j\nu_j$ is harmonic, and arguing as in the disk case, we find that 
\begin{multline*}
\int_{\Gg}\left(|G(z)|^2-H(z,z_0)-\sum_{1}^{n-1}\lambda_j\nu_j(z)\right)|f(w)|^2dA(z)\\=\iint_{\Gg\times \Gg}\left[g_{\Delta^2}(z,w)\Delta(|G|^2)(z)\Delta(|f|^2)(w)\right]dA(z)dA(w) \quad \textrm{for}\quad  f\in H^{\infty}(\Gg).
\end{multline*}
Here, $g_{\Delta^2}(z,w)$ is the biharmonic Green's function for $\Gg$, that is, 
\[\left\{\begin{array}{cc}\Delta_z^2g_{\Delta^2}(z,w)=\delta_{w} & \textrm{in}\quad  \Gg\\ g_{\Delta^2}=\nabla g_{\Delta^2}=0  &  \textrm{on}\quad \Gamma
\end{array}\right. .\]
When $\Gg$ is the unit disk, one now uses subharmonicity along with the crucial fact that the biharmonic Green's function $g_{\Delta^2}$ for the disk is positive to finish the proof that $G$ is a contractive divisor. 

If $\Gg$ is a multiply connected domain, then $g_{\Delta^2}$ changes sign in general, and it is no longer readily apparent that the right-hand side is non-negative. Nevertheless, it is conceivable that the integral on the left is still non-negative, and this would leave a path to proving quasicontractivity. We have not been able to advance along this route.

In conclusion, we pose a  basic problem we cannot at present solve.
\begin{problem}\label{oneptproblem}
Consider the Bergman space $A^2(\Gg)$ on a multiply connected domain $\Gg$ and fix a point $z_1\in \Gg$. Does there exist a function $G$ and a constant $C_{\Gg}$, depending on $C_{\Gg}$ but not on $z_1$, such that $G(z_1)=0$ and 
\[\|f/G\|_{A^2(\Gg)}\leq C_{\Gg}\|f\|_{A^2(\Gg)}\]
for every $f\in A^2(\Gg)$ having $f(z_1)=0$?

In particular, do such a function and such a constant exist for the Bergman space of the annulus $\Gg=\{r<|z|<1\}$?
\end{problem}
The more general version of Problem \ref{oneptproblem} is the following.
\begin{problem}\label{multiptprob}
Suppose $\Gg$ is a finitely connected domain bounded by analytic curves. Does $A^p(\Gg)$ support quasicontractive divisors? That is, given an $A^p(\Gg)$ zero set $\mathcal{Z}$, does there exist a function $G$ and a constant $C_{\Gg, p}$ such that 
\[\|f/G\|_{A^p(\Gg)}\leq C_{\Gg,p}\|f\|_{A^p(\Gg)}\]
for every $f\in \mathscr{M}_{\mathcal{Z}}$?
\end{problem}

Another possible way of attacking Problems \ref{oneptproblem} and \ref{multiptprob} in $A^2(\Gg)$ is via uniformization, briefly alluded to in the Introduction. By our assumptions on the domain $\Gg$, its universal covering space can be identified with the unit disk $\mathbb{D}$. If $\tau \colon \mathbb{D}\to \Gg$ denotes the corresponding uniformizing map, we say that a conformal map $m\colon \mathbb{D}\to \mathbb{D}$ is a deck transformation if $\tau \circ m=\tau$. The set of all deck transformations form a group under composition, which we denote by $\mathrm{Aut}(\tau)$.

Now let $\tau \colon \mathbb{D}\to \Gg$ be a uniformizing map, and consider the weighted Bergman space $A^2(\mathbb{D},|\tau'|^2dA)$. If we could establish the existence of quasicontractive divisors in this space, then we would be able to transfer back to $A^2(\Gg)$. For weighted Bergman spaces in the disk with logarithmically subharmonic weights, these questions have been addressed in a number of papers, see \cite{Hbook,DSbook,Weir} and the references therein. However, a significant obstruction manifests itself.
\begin{problem}
Suppose $\mathcal{Z}=\{z_j\}_{j=1}^{\infty}$ is an $A^2(\mathbb{D},|\tau'|^2dA)$-zero set, automorphic with respect to $\mathrm{Aut}(\tau)$, the group of deck transformations corresponding to the uniformizing map $\tau$. 
Let $G$ be the contractive divisor for $A^2(\mathbb{D})$ that is associated with $\mathcal{Z}$. 

Is $G$ automorphic with respect to $\mathrm{Aut}(\tau)$?
\end{problem}
We suspect the answer should be positive, but we do not have a proof.

Finally, we mention yet another approach to Problems \ref{oneptproblem} and \ref{multiptprob} that involves reduction to simply connected domains. 

\begin{problem}\label{productprob}
Let $\mathcal{Z}=\{z_j\}_{j=1}^{\infty}$ be an $A^2(\Gg)$-zero set. Let $A^2(\Omega_j)$ be the Bergman space on the simply connected domain $\Omega_j\supset \Gg$ bounded by $\gamma_j$. 

Can we write $\mathcal{Z}=\bigcup_{j=1}^n \mathcal{Z}_j$, where $\mathcal{Z}_{j}\cap \mathcal{Z}_{k}=\emptyset$, and each $\mathcal{Z}_j$ is an $A^2(\Omega_j)$-zero set? If this were the case, and if we let $G_j$ denote the contractive zero divisor for $A^2(\Omega_j)$, then $G=\prod_{j=1}^{n}G_j$ would solve Problem \ref{multiptprob}. 
\end{problem}
In order to resolve Problem \ref{productprob}, we should be able to answer the following question.
\begin{problem}
Can every $f\in A^2(\Gg)$ be factored as 
\[f=\prod_{j=1}^nf_j, \quad f_j \in A^2(\Omega_j)\]
where \[c_{\Gg}\|f\|_{A^2(\Gg)}\leq \|f_j\|_{A^2(\Omega_j)}\leq C_{\Gg}\|f\|_{A^2(\Gg)}\]
for each $j=1, \ldots,n$, with $0<c_{\Gg}\leq C_{\Gg}<\infty$.
\end{problem}
The corresponding problem for sums, that is, writing $f=\sum_{j=1}^ng_j$ with $g_j\in A^2(\Omega_j)$ and $\|f\|_{A^2(\Gg)}\sim \|g_j\|_{A^2(\Omega_j)}$, can be seen to admit a solution by splitting 
the Cauchy integral formula for $f$ on a regular exhaustion of $\Gg=\bigcup_j\Omega_j$. See \cite{Durbook,SKhavTum1,SKhavTum4} for similar questions in the settings of Hardy and Smirnov spaces. Problem \ref{productprob}
involves applying the Cauchy integral formula to logarithms that are not single-valued. Nevertheless, we believe Problem \ref{productprob} should admit a positive resolution.

\section{Extension to rough domains and Riemann surfaces}
The Hardy spaces $H^p$, $p>0$, as well as the Nevanlinna and Smirnov classes $N$ and $N_+$, are defined in terms of harmonic measures. Hence, they are conformally invariant, as are Schottky functions, cf. \cite{Khav1}. Therefore, all the observations and results proved for these spaces can be extended {\it mutatis mutandis} to finitely connected domains with arbitrary Jordan boundaries by using factorization theorems as in \cite{Khav1}. 

However, the case of the Smirnov spaces $E^p$, $p>0$, is much more delicate. In domains with smooth boundaries, or more generally, in domains such that the conformal map
\[\psi \colon \Gg \to K\]
of the given domain $\Gg$ onto a canonical circular domain satisfies 
\begin{equation}
0<c_1\leq |\psi'(z)|\leq c_2<\infty
\label{confmapbound}
\end{equation}
for some constants $c_1,c_2$, the Smirnov spaces $E^p$ coincide with the Hardy spaces $H^p$ as sets \cite{Durbook,SKhavTum1}. Moreover, there is a simple isometric isomorphism of the form
\[f\mapsto f (\psi')^{1/p}\]
between $H^p(\Gg)$ and $E^p(\Gg)$ provided by the Keldysh-Lavrentiev theorem (again, see \cite{Durbook,SKhavTum1}). Thus,  Theorems \ref{multiconnbeurling} and \ref{QCdiv} as well as Propositions 1-3 extend to $n$-connected domains $\Gg$ for which condition \eqref{confmapbound} holds.

It should be noted that $E^p$-classes are of great importance in more general $n$-connected domains, where the boundary is merely assumed to be comprised of rectifiable Jordan curves. The reason is that by well-known generalizations of the F. and M. Riesz theorem \cite{SKhavTum1,Durbook,Golbook}, $E^p$-functions ($p\geq1$) in such domains are still Cauchy integrals of functions in $L^p(\Gamma, ds)$. One could therefore argue that $E^p$ spaces are ``more natural" than their Hardy counterparts in domains with rectifiable boundaries. However, in this generality, the situation is quite complicated. 

An $n$-connected domain is said to be of Smirnov type if $\psi'\in N_{+}$, where $\psi \colon \Gg\to K$ is a conformal map as above; see \cite{Durbook,Khav0,DeCKhav1,DeCKhav2}. If $\Gg$ is a Smirnov domain, then $R(\Gg)$ is dense in each $E^p$, $p>0$, and moreover $N\supset H^p$ for all $p>0$, and hence the notion of an invariant subspace carries over. Factorization theorems and the Theorems and Propositions in this paper also extend. Yet, in domains not of Smirnov type, $E^p\not\subset N_+$ \cite{Khav1}, and $R(\Gg)$ is never dense in $E^p$, $p>0$. In fact, this can be taken as an equivalent definition of Smirnov domains, going back to V.I. Smirnov himself \cite{Durbook,Golbook}. Therefore, for a domain not of Smirnov type yet having a rectifiable boundary, it is not clear how an ``invariant subspace" should be defined. If one takes the easy way out, and defines invariant subspaces in $E^p$ as isomorphic images of invariant subspaces in $H^p$ using the isomorphism $\psi$, then again, all statements extend trivially.

Here is a more challenging question.
\begin{problem}
Find analogs of Theorems \ref{multiconnbeurling} and \ref{QCdiv} for $E^p(\Gg)$-spaces on non-Smirnov domains $\Gg$.
\end{problem}
We do not have a crisp conjecture in mind, but it appears likely that the singular factor of $\psi'$ would play a significant role.

There is another delicate point for domains of connectivity $n>1$.  Namely, suppose that some boundary components in $\Gamma=\bigcup_{j=1}^n\gamma_j$ are smooth, while others are ``rough"---can one formulate reasonable analogs of Theorem \ref{multiconnbeurling} and \ref{QCdiv} in that setting? For instance, suppose $\Gamma=\gamma_1\cup \mathbb{T}$ with $\mathrm{int}(\gamma_1)$
being a non-Smirnov domain, for instance, a ``pseudocircle" \cite{Durbook}.

For the Bergman spaces $A^p$, it is known that Theorem \ref{multiconnbeurling} fails for general invariant subspaces, even in the unit disk \cite{DSbook,Hbook}. However, the analog of Theorem \ref{multiconnbeurling} does hold for zero-based invariant subspaces in the disk: for $p=2$ this is due to Hedenmalm \cite{Hed}, and to Duren, Khavinson, Shapiro, and Sundberg for general exponents \cite{DKSSPac}. This reduced version of Beurling's theorem, as well as the analog of Theorem \ref{QCdiv} extends to simply connected domains for which \eqref{confmapbound} holds. This is also true for simply connected domains having strictly positive harmonic reproducing kernel and non-negative biharmonic Green's function. See \cite[Theorem 3]{DKSSMich}, and the remarks that follow, for details.

Now, as we mentioned earlier, it is an open problem to construct even one-point quasicontractive divisors in multiply connected domains with analytic boundaries. Relaxing boundary smoothness but decreasing connectivity, we can pose the following problem.
\begin{problem}
Suppose $\Gg$ is a simply connected domain with rough boundary $\Gamma$. (For instance, suppose $\Gg$ is such that \eqref{confmapbound} is violated for a conformal map onto the disk). Does $A^p(\Gg)$ support quasicontractive divisors? 
 \end{problem}
 
Next, we turn to finite Riemann surfaces with smooth boundaries. In this setting, S.Ya. Khavinson \cite{SKhav1,SKhav2} has developed factorization techniques that should in principle allow Theorems \ref{multiconnbeurling} and \ref{QCdiv} to be extended. To work out the technical details nevertheless seems challenging, as one needs to differentiate between removing periods around boundary components and removing periods around handles. We believe that it is a worthy task to prove extensions of Theorems \ref{multiconnbeurling} and \ref{QCdiv} for Riemann surfaces rigorously.
 
Finally, one might consider potential analogs of Theorems \ref{multiconnbeurling} and \ref{QCdiv} for Hardy and Bergman spaces in infinitely connected domains or on Riemann surfaces of infinite genus. A natural starting point here might be the so-called Parreau-Widom domains; see \cite{Heibook,P,W}. Virtually nothing is known in this setting, and $H^p$-factorization involving single-valued zero divisors is not currently available. 
In general domains or Riemann surfaces, criteria for $H^p$ or $A^p$ to be non-trivial, in the sense of containing non-constant functions, differ quite substantially for different values of $p$, see \cite{Carbook,Hej,Has2,Has3,Hed2,Bjorn}, making this type of endeavor potentially quite complicated but also rewarding.
\section*{Acknowledgements} CB and DK are grateful to Stockholm University, where part of this work was carried out, for hospitality. AS thanks the Department of Mathematics and Statistics, University of South Florida, for its warm hospitality in January 2018.  DK acknowledges support from the Simons Foundation in the form of a Collaborative Grant.

\end{document}